\documentclass[11pt]{article}

\usepackage[a4paper,margin=1.1in]{geometry}
\usepackage[T1]{fontenc}
\usepackage[utf8]{inputenc}
\usepackage{lmodern}
\usepackage{amsmath,amssymb,amsthm,mathtools}
\usepackage{booktabs,longtable,array,enumitem}
\usepackage{microtype}
\usepackage{url}
\usepackage{hyperref}
\usepackage[nameinlink,noabbrev]{cleveref}
\hypersetup{colorlinks=true,linkcolor=blue,citecolor=blue,urlcolor=blue}

\newtheorem{theorem}{Theorem}[section]
\newtheorem{lemma}[theorem]{Lemma}

\newtheorem{construction}[theorem]{Construction}
\theoremstyle{definition}

\crefname{theorem}{Theorem}{Theorems}
\crefname{lemma}{Lemma}{Lemmas}
\crefname{proposition}{Proposition}{Propositions}
\crefname{corollary}{Corollary}{Corollaries}
\crefname{construction}{Construction}{Constructions}
\crefname{definition}{Definition}{Definitions}
\crefname{remark}{Remark}{Remarks}
\crefname{table}{Table}{Tables}
\crefname{section}{Section}{Sections}
\crefname{equation}{Equation}{Equations}

\newcommand{\Cxi}{C^{\xi}}

\newcommand{\calB}{\mathcal B}
\newcommand{\ex}{\Xi}
\newcommand{\ZZ}{\mathbb Z}

\title{Minimum-excess coverings of complete graphs by 3-, 4-, and 5-cliques}
\author{Petr Kov\'a\v{r}\thanks{Department of Applied Mathematics, VSB--Technical University of Ostrava, 17. listopadu 2172/15, 708 00 Ostrava, Czech Republic. Email: \texttt{petr.kovar@vsb.cz}.}
\and Yifan Zhang\thanks{Department of Applied Mathematics, VSB--Technical University of Ostrava, 17. listopadu 2172/15, 708 00 Ostrava, Czech Republic; Department of Mathematics, University of Ostrava, Ml\'ynsk\'a 702/5, 702 00 Ostrava, Czech Republic; Department of Algebra, Charles University, Sokolovsk\'a 49/83, 186 75 Prague, Czech Republic. Email: \texttt{yifan.zhang@vsb.cz}.}}
\date{}

\begin{document}
\maketitle

\begin{abstract}
We study two-level optimal coverings of the complete graph $K_v$ by cliques of orders $3$, $4$, and $5$.  The first level minimizes the excess, namely the number of repeated edge occurrences, and the second level minimizes the number of cliques among coverings with minimum excess.  We first isolate the two-size problem for triangles and 4-cliques, where the main congruence and local-degree methods already appear and where nonzero excess is unavoidable for one residue class of $v$.  This motivates the passage to quintuples.  For $\{K_3,K_4,K_5\}$ coverings we determine the minimum excess for every $v$, use an edge-count reduction for the secondary optimization, obtain exact values in ten residue classes modulo $20$, and give bounds for the remaining classes.
\end{abstract}

\noindent\textbf{Keywords:} graph covering; clique decomposition; pairwise balanced design; group divisible design; covering design

\noindent\textbf{MSC 2020:} 05B40, 05C70

\section{Introduction}

Covering complete graphs by complete subgraphs is a classical problem in design theory.  A $t$-$(v,k,\lambda)$ covering is a pair $(X,\calB)$, where $|X|=v$ and $\calB$ is a collection of $k$-subsets of $X$ such that each $t$-subset of $X$ is contained in at least $\lambda$ blocks.  We work with $t=2$ and $\lambda=1$, so the objects are coverings of $K_v$ by complete subgraphs.

The motivating application is load distribution in parallel boundary element methods.  One wants to split a dense interaction pattern among processors while keeping repeated communication small; cyclic and clique decompositions of complete graphs naturally encode such splittings \cite{Lukas__2015,Dohr_2019}.  If all blocks have one size, minimizing the number of blocks is essentially tied to minimizing repeated pairs.  With several block sizes this is no longer true: larger cliques reduce the number of blocks but may increase repeated edges.  We therefore use a two-level optimum: first minimize the excess, and only then minimize the number of blocks among coverings with minimum excess.

The first mixed-clique case considered here is the two-size problem with $K_3$ and $K_4$.  It is simpler than the three-size problem, but it already contains the main ingredients used later: edge-count identities, local congruences from vertex degrees, truncations of group divisible designs, and small exceptional cases.  It also explains why adding quintuples is natural.  For $\{K_3,K_4\}$ coverings, a congruence obstruction forces nonzero excess whenever $v\equiv2\pmod3$; allowing $K_5$ removes this obstruction for all orders except $6$ and $8$.  The paper therefore treats the $\{K_3,K_4\}$ problem as the foundational two-size case and then develops the $\{K_3,K_4,K_5\}$ theory around a linear edge-count identity for the secondary optimization.

\section{Preliminaries and counting identities}

Let $S=\{k_1<\cdots<k_r\}$ be a set of allowed block sizes.  A $2$-$(v,S,1)$ covering is a pair $(X,\calB)$, where $|X|=v$, every block has size in $S$, and every pair of points of $X$ is contained in at least one block.  Repetition of blocks is permitted.

The \emph{excess multigraph} $\ex$ has an edge $xy$ with multiplicity $m(xy)-1$ whenever $xy$ is covered $m(xy)\ge 2$ times; isolated vertices are omitted.  Thus $|E(\ex)|$ is exactly the number of repeated edge occurrences.  Let $\xi_S(v)$ be the minimum possible value of $|E(\ex)|$.  Let
\[
        \Cxi(v,S,2)
\]
be the minimum number of blocks in a $2$-$(v,S,1)$ covering with excess size $\xi_S(v)$.

For $S=\{3,4,5\}$, write $\alpha,\beta,\gamma$ for the numbers of blocks of size $3,4,5$, respectively.  For a vertex $x$, write $\alpha_x,\beta_x,\gamma_x$ for the corresponding numbers of blocks containing $x$.  If $d_\ex(x)$ is the degree of $x$ in the excess multigraph, then
\begin{equation}\label{eq:local}
        2\alpha_x+3\beta_x+4\gamma_x=v-1+d_\ex(x).
\end{equation}
For a decomposition this becomes $2\alpha_x+3\beta_x+4\gamma_x=v-1$.

\begin{lemma}[Block-count identity]\label{lem:blockcount}
Let a $2$-$(v,\{k_1<\cdots<k_r\},1)$ covering have $m_i$ blocks of size $k_i$.  Then
\begin{equation}\label{eq:blockcount-general}
        |\calB|=\frac{\binom v2+|E(\ex)|}{\binom{k_r}{2}}
        +\sum_{i=1}^{r-1}m_i\left(1-\frac{\binom{k_i}{2}}{\binom{k_r}{2}}\right).
\end{equation}
In particular, for $\{3,4,5\}$ coverings,
\begin{equation}\label{eq:blockcount}
        c:=\alpha+\beta+\gamma
        =\frac{\binom v2+|E(\ex)|}{10}+\frac{7\alpha+4\beta}{10}.
\end{equation}
\end{lemma}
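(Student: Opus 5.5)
The plan is to double count edge occurrences. Every block of size $k_i$ contributes $\binom{k_i}{2}$ edge occurrences, so the total number of edge occurrences is $\sum_{i=1}^r m_i\binom{k_i}{2}$. The same total can be counted pair by pair: each pair $xy$ of $X$ is covered $m(xy)\ge1$ times, which gives $\sum_{xy}m(xy)=\binom v2+\sum_{xy}(m(xy)-1)$. By the definition of the excess multigraph $\ex$, the last sum is exactly $|E(\ex)|$, since pairs covered once contribute nothing. This yields the identity
\[
\sum_{i=1}^r m_i\binom{k_i}{2}=\binom v2+|E(\ex)|.
\]

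Next I will rewrite $|\calB|=\sum_{i=1}^r m_i$ by adding and subtracting $\sum_i m_i\binom{k_i}{2}/\binom{k_r}{2}$:
\[
|\calB|=\frac{1}{\binom{k_r}{2}}\sum_{i=1}^r m_i\binom{k_i}{2}+\sum_{i=1}^{r}m_i\Bigl(1-\frac{\binom{k_i}{2}}{\binom{k_r}{2}}\Bigr).
\]
The $i=r$ term of the second sum vanishes. Substituting the double-counting identity into the first term then gives \eqref{eq:blockcount-general}.

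For the special case $S=\{3,4,5\}$, I substitute $\binom32=3$, $\binom42=6$ and $\binom52=10$. The coefficients become $1-3/10=7/10$ for $\alpha$ and $1-6/10=4/10$ for $\beta$, which gives \eqref{eq:blockcount}.

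No step is a real obstacle. The only point needing care is that repeated blocks are permitted: the double count is over block–pair incidences with multiplicity, so identical blocks are counted separately, and this matches the definition of $m(xy)$.
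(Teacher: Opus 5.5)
Your proposal is correct and follows the paper's proof: the same double count $\sum_i m_i\binom{k_i}{2}=\binom v2+|E(\ex)|$, followed by the same elementary algebra. Adding and subtracting $\sum_i m_i\binom{k_i}{2}/\binom{k_r}{2}$ is equivalent to the paper's step of solving for $m_r$ and adding $m_1+\cdots+m_{r-1}$.
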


\begin{proof}
Count covered pairs with multiplicity:
\[
        \sum_{i=1}^r m_i\binom{k_i}{2}=\binom v2+|E(\ex)|.
\]
Solving for $m_r$ and adding $m_1+\cdots+m_{r-1}$ gives \eqref{eq:blockcount-general}.  The special case follows by substituting $\binom32=3$, $\binom42=6$, and $\binom52=10$.
\end{proof}

Thus, after minimum excess has been fixed, the secondary problem for $\{3,4,5\}$ is reduced to minimizing the linear form $7\alpha+4\beta$.  In the decomposition case,
\begin{equation}\label{eq:delta}
        \delta:=10c-\binom v2=7\alpha+4\beta.
\end{equation}

We use standard existence theorems for BIBDs, RBIBDs, PBDs, and GDDs.  In particular, BIBD$(v,3,1)$, BIBD$(v,4,1)$, and BIBD$(v,5,1)$ exist precisely in the usual congruence classes $v\equiv 1,3\pmod6$, $v\equiv 1,4\pmod {12}$, and $v\equiv 1,5\pmod {20}$, respectively \cite{kirkman-1847,hanani-1975}.  We also use the existence theorem for RBIBD$(v,3,1)$ when $v\equiv 3\pmod6$ \cite{RayChaudhuri1971,Lu1990}, and standard PBD/GDD existence results from \cite{handbook-designs,brouwer-1977,Colbourn1992,rees-1989}.

\section{The two-size problem: 3- and 4-cliques}\label{sec:k34}

We record the complete solution for coverings by triangles and 4-cliques in a compact form.  This case is the natural starting point for mixed-clique coverings: once the excess is fixed, the secondary objective becomes a one-parameter problem, and the local congruence arguments developed here reappear in the three-size setting.

Let $\alpha$ and $\beta$ be the numbers of $K_3$'s and $K_4$'s in a $2$-$(v,\{3,4\},1)$ covering, and let $\xi$ be the size of the excess.  Counting edges gives
\begin{equation}\label{eq:k34-count}
        3\alpha+6\beta=\binom v2+\xi,
\end{equation}
so
\begin{equation}\label{eq:k34-objective}
        \alpha+\beta=\frac{1}{6}\left(\binom v2+\xi+3\alpha\right).
\end{equation}
Thus, among coverings with minimum excess, minimizing the number of blocks is equivalent to minimizing the number of triangles.

\begin{lemma}\label{lem:k34-congruence}
For every $2$-$(v,\{3,4\},1)$ covering,
\[
        \xi\equiv \frac{v(1-v)}{2}\pmod3.
\]
In particular, $\xi\equiv2\pmod3$ when $v\equiv2\pmod3$, and $\xi\equiv0\pmod3$ otherwise.
\end{lemma}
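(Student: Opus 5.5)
The plan is to reduce the edge-count identity \eqref{eq:k34-count} modulo $3$. Every triangle contributes $3$ edge occurrences and every $K_4$ contributes $6$. Hence the left-hand side $3\alpha+6\beta$ is divisible by $3$, and so
\[
        \xi\equiv-\binom v2=\frac{v(1-v)}{2}\pmod3 .
\]
This is meaningful as a congruence because $\binom v2$ is an integer and $2$ is invertible modulo $3$. So $\xi\equiv -\tfrac{v(v-1)}{2}\equiv v(v-1)\pmod 3$, using $-\tfrac12\equiv 1\pmod 3$.

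For the ``in particular'' part, I would check the three residues of $v$ modulo $3$.
\begin{itemize}[leftmargin=2em]
\item If $v\equiv0$ or $v\equiv1\pmod3$, then one of $v$ and $v-1$ is divisible by $3$. Hence $v(v-1)\equiv0$ and $\xi\equiv0\pmod3$.
\item If $v\equiv2\pmod3$, then $v(v-1)\equiv2\cdot1=2$, so $\xi\equiv2\pmod3$.
\end{itemize}
Equivalently, one can compute $\binom v2\pmod 3$ directly: it is $0$ for $v\equiv 0,1$ and $1$ for $v\equiv 2$. Then $\xi\equiv-\binom v2$ gives the same values.

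There is no real obstacle here. The only point needing care is reading $\tfrac{v(1-v)}{2}$ as an integer (namely $-\binom v2$) rather than as a fraction reduced modulo $3$. I would state the lemma via $\xi\equiv-\binom v2\pmod 3$ to avoid any ambiguity.
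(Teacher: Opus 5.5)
Your proposal is correct and follows the paper's own argument: the paper also just reduces the edge-count identity $3\alpha+6\beta=\binom v2+\xi$ modulo $3$. Your residue-by-residue check for the ``in particular'' clause, and your remark that $\tfrac{v(1-v)}{2}$ should be read as the integer $-\binom v2$, simply spell out what the paper leaves implicit.
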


\begin{proof}
This follows immediately from \eqref{eq:k34-count} modulo $3$.
\end{proof}

\begin{theorem}\label{thm:k34-excess}
For $v\ge3$,
\[
\xi_{3,4}(v)=
\begin{cases}
0, & v\equiv0,1\pmod3\text{ and }v\ne6,\\
2, & v\equiv2\pmod3,\\
3, & v=6.
\end{cases}
\]
\end{theorem}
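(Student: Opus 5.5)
Lower bounds come first. By \cref{lem:k34-congruence}, $\xi\ge 2$ when $v\equiv2\pmod3$ and $\xi\ge0$ otherwise, so in those classes only constructions are needed. For $v=6$ an extra argument is required to show that $\xi=0$ and $\xi=3$ can be excluded down to $\xi\ge3$, i.e.\ that $\xi=0$ is impossible. Suppose $K_6$ had a $\{3,4\}$-decomposition. The local identity at a vertex $x$ is $2\alpha_x+3\beta_x=5$, which forces $\alpha_x=\beta_x=1$ for every $x$. Hence $\beta=6/4$, which is not an integer, a contradiction. (The counting identity $3\alpha+6\beta=15$ is consistent, so the local degree argument is what is needed.) Thus $\xi_{3,4}(6)\ge3$ by the congruence.

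For the upper bounds with $v\equiv0,1\pmod3$, $v\ne6$, I would invoke the classical PBD result that a PBD$(v,\{3,4\})$ exists exactly for $v\equiv0,1\pmod3$, $v\ne6$ (Hanani/Brouwer, \cite{brouwer-1977,handbook-designs}). If a direct reference is preferred, the constructions split as follows:
\begin{itemize}
\item STS$(v)$ for $v\equiv1,3\pmod6$;
\item $S(2,4,v)$ for $v\equiv1,4\pmod{12}$;
\item for $v\equiv0\pmod 6$, $v\ge 12$, add a point to a 3-GDD of type $2^{?}$, or take an RBIBD$(v-3,3,1)$ (when $v-3\equiv3\pmod6$) and extend three parallel classes by new points to form $K_4$'s, adding one triangle on the new points (so $v=9+3\equiv0$);
\item for $v\equiv4\pmod6$, a $\{4\}$-GDD/Kirkman extension similarly.
\end{itemize}
For $v=3,4$ the statement is trivial.

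For $v\equiv2\pmod3$ with $\xi=2$, I would take a decomposition of $K_{v+1}$ ($v+1\equiv0\pmod3$, and $v+1\ne6$ unless $v=5$) containing a $K_3$ through a chosen point $\infty$. Deleting $\infty$ turns blocks through $\infty$ of size 4 into triangles and blocks of size 3 into edges. Each resulting edge $ab$ must then be covered by an extra block, and to keep the excess at 2 I need exactly one triangle block through $\infty$, say $\{\infty,a,b\}$. The edge $ab$ is then covered by replacing it with a triangle $\{a,b,c\}$, repeating $ac$ and $bc$ and giving excess exactly 2. So I need PBD$(v+1,\{3,4\})$ in which some point lies in exactly one triangle. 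The local identity $2\alpha_x+3\beta_x=v$ with $v\equiv2\pmod 3$ gives $\alpha_x\equiv1\pmod3$, so choosing designs with $\alpha_\infty=1$ is the natural requirement. The small cases $v=2$ (trivial, excluded since $v\ge3$) and $v=5$ are handled by hand: for $K_5$, take a $K_4$ plus the triangle $\{5,1,2\}$ and then $\{5,3,4\}$, giving excess exactly 2.

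The main obstacle is guaranteeing a PBD$(v+1,\{3,4\})$ with a point in exactly one triangle for every $v+1\equiv0\pmod3$, $v+1\ge 9$. My first attempt would be to take a 4-GDD or a $\{3,4\}$-GDD with groups and fill a group containing $\infty$ with a single triangle. If that fails, I would fall back on small direct constructions and recursion via GDD truncation.
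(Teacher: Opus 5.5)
\textbf{There is a gap: the upper bound $\xi_{3,4}(v)\le 2$ for $v\equiv 2\pmod 3$ is not established.} This is the main content of the theorem, and you leave it as an open ``main obstacle''.

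The rest of the proposal is fine. Your lower bounds are sound, including the local-degree exclusion of a $\{K_3,K_4\}$ decomposition of $K_6$. You never state the matching upper bound for $v=6$, e.g.\ $\{1,2,3,4\},\{1,2,5,6\},\{3,4,5,6\}$ with excess $\{12,34,56\}$. Quoting the existence of PBD$(v,\{3,4\},1)$ for $v\equiv0,1\pmod3$, $v\ne6$, settles the zero-excess classes.

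Your reduction itself is correct: take a PBD$(v+1,\{3,4\},1)$ with a point $\infty$ on exactly one triangle $\{\infty,a,b\}$, delete $\infty$, and cover $ab$ by a triangle $abc$. It can be realized in most classes:
\begin{itemize}
\item for $v\equiv2,11\pmod{12}$, $v\ge11$, use a $4$-GDD of type $3^{(v+1)/3}$ with its groups filled by triangles (this is the paper's truncation);
\item for $v\equiv5\pmod6$, $v\ge11$, your own RBIBD$(v-2,3,1)$-plus-three-points construction puts each new point on exactly one triangle.
\end{itemize}
For $v\equiv8\pmod{12}$, however, $(v+1)/3\equiv3\pmod4$, so no $4$-GDD of type $3^{(v+1)/3}$ exists. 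The resolvable-design trick needs $v+1\equiv0\pmod6$. Your proposal offers nothing concrete for this class.

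Your claimed range $v+1\ge 9$ also fails at the very first case, $v+1=9$. In a PBD$(9,\{3,4\},1)$ every point has $2\alpha_x+3\beta_x=8$, so $\beta_x\in\{0,2\}$. A $4$-block would meet four distinct further $4$-blocks, forcing $\beta\ge5$, whereas $4\beta=\sum_x\beta_x\le18$ gives $\beta\le4$. So the only such PBD is STS$(9)$, every point lies on four triangles, and $v=8$ is out of reach of your mechanism.

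The missing idea is the one in the paper's sketch: use small excess-$2$ coverings as fillers, instead of insisting that the excess come from one deleted point.
\begin{itemize}
\item For $v=8$, take three $K_4$'s $\{a,b,x_i,y_i\}$, $i=1,2,3$, on a common edge $ab$. Add the triangles $\{x_1,x_2,x_3\}$, $\{x_1,y_2,y_3\}$, $\{y_1,x_2,y_3\}$, $\{y_1,y_2,x_3\}$, which decompose the octahedron on the other six points. The excess is $ab$ with multiplicity two, and $(\alpha,\beta)=(4,3)$ as in \cref{tab:k34}.
\item For $v\equiv8\pmod{12}$, $v\ge20$, start from a $4$-GDD of type $3^u6^1$ with $u\equiv1\pmod4$, $u\ge5$. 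Delete one point of the $6$-point group, fill the $3$-point groups with triangles, and fill the resulting $5$-point hole with your excess-$2$ covering of $K_5$. This gives $v=3u+5$.
\end{itemize}
With $u\equiv0\pmod4$ the same construction also gives $v\equiv5\pmod{12}$. In both cases it reproduces the parameters $\alpha=(2v-4)/3$, $\beta=(v^2-5v+12)/12$ listed in \cref{tab:k34}.
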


\begin{proof}[Proof sketch]
The congruence lower bound is \cref{lem:k34-congruence}.  Decompositions in the zero-excess classes are obtained from BIBDs and standard GDD constructions, with the small orders supplied by direct designs.  If $v\equiv2\pmod3$, the congruence forces at least two repeated edges; the matching upper bounds follow by truncating suitable GDDs and then covering the remaining pair, or by filling the small $K_5$ and $K_8$ cases.  Finally, $K_6$ has no decomposition into triangles and 4-cliques, while three 4-cliques give excess three.
\end{proof}

\begin{theorem}\label{thm:k34-optimum}
The minimum number of blocks in a $2$-$(v,\{3,4\},1)$ covering with minimum excess is given in \cref{tab:k34}.  The columns $\alpha_{\min}$ and $\beta_{\max}$ give one optimum parameter set.
\end{theorem}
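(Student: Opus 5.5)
By \eqref{eq:k34-objective}, once the excess is fixed at $\xi_{3,4}(v)$ from \cref{thm:k34-excess}, minimizing $\alpha+\beta$ is the same as minimizing $\alpha$. So I would prove each row of \cref{tab:k34} in two halves. The first is a lower bound on $\alpha$ among coverings with excess $\xi_{3,4}(v)$. The second is a construction attaining that bound, whose $\beta$ is then forced by \eqref{eq:k34-count}.

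For the lower bounds I would use the $\{3,4\}$ analogue of \eqref{eq:local}, namely $2\alpha_x+3\beta_x=v-1+d_\ex(x)$, together with $\sum_x\alpha_x=3\alpha$.

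\textbf{Zero-excess classes.} Reducing modulo $3$ gives $\alpha_x\equiv 2(v-1)\pmod 3$ for every $x$.
\begin{itemize}
\item If $v\equiv0\pmod3$, every vertex lies in at least one triangle, so $3\alpha\ge v$ and $\alpha\ge v/3$. This is attained by a parallel class of triangles (a resolution class of an RBIBD, or a truncated $4$-GDD of type $3^{v/3}$ where one exists).
\item If $v\equiv1\pmod3$, we have $\alpha_x\in\{0,3,6,\dots\}$. Reducing \eqref{eq:k34-count} modulo $6$ gives $\alpha\equiv\binom v2/3\pmod2$. For $v\equiv1,4\pmod{12}$ this allows $\alpha=0$, realized by a BIBD$(v,4,1)$. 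For $v\equiv7,10\pmod{12}$ it forces $\alpha$ odd, hence $\alpha\ge1$.
\item In that last case, the vertices lying in triangles each have $\alpha_x\ge3$, and the triangles among them form a partial triple system covering each of their pairs at most once. A small counting argument on this set should push the bound to the value in the table (I expect the bound to correspond to a $K_7$ on $7$ points inside a $\{4,7\}$-PBD).
\item The matching constructions would come from $\{4,7^*\}$-PBDs, or from GDDs with one long group filled by a Fano plane or by an STS$(7)$ arrangement. Small orders such as $v=7,10,19$ would be handled directly.
\end{itemize}

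\textbf{The class $v\equiv2\pmod3$, excess $2$.} The excess multigraph is either one double edge or a path $P_3$. In either case I would enumerate the possible degree sequences of $\ex$ and redo the mod-$3$ and parity analysis vertex by vertex. Vertices outside $\ex$ satisfy $\alpha_x\equiv 2(v-1)\equiv 2\pmod3$, so $\alpha_x\ge2$. The at most three vertices inside $\ex$ have shifted residues and may need fewer triangles. This gives $3\alpha\ge 2(v-r)+(\text{small correction})$, roughly $\alpha\ge(2v-c)/3$ with $c$ depending on the shape of $\ex$. The minimum over the two shapes gives the bound, after checking it against the global constraint from \eqref{eq:k34-count} modulo $6$. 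For constructions I would take a $\{3,4\}$-structure with a near-one-factor of triangles, for example by truncating a $4$-GDD of type $3^t$ or $6^t$. Each group then becomes a pair of triangles through all but a few points, and the leftover edge or small hole is covered at cost $2$ in excess.

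\textbf{Remaining cases and expected obstacle.} The case $v=6$ (excess $3$) and the tiny orders $v=3,4,5,8,9,10,11$ I would settle by exhaustive check or explicit small coverings. In particular, $K_5$ and $K_8$ each need a direct argument, since for them the generic GDD recursion is not available.

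I expect the $v\equiv2\pmod3$ class to be the main obstacle. The lower bound there requires a careful case split on how the two excess edges sit. The constructions then must place the excess precisely at the vertices where the local congruence saves triangles, which forces GDD truncations with prescribed hole structure in each residue class modulo $12$ (or modulo $36$). I would first try to obtain these from $4$-GDDs of type $6^t$ and $12^t$ with one truncated group, falling back to direct constructions for the finitely many uncovered small orders.
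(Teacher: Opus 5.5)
Your framework matches the paper's: minimize $\alpha$ via \eqref{eq:k34-objective}, read $\alpha_x \bmod 3$ off the local equation, correct with the parity $\alpha\equiv\binom{v}{2}/3\pmod 2$ from \eqref{eq:k34-count}, and match with BIBD/PBD/GDD constructions. However, the proposal cannot establish the exceptional rows of \cref{tab:k34}, and these, not the class $v\equiv2\pmod3$, are the real obstacle.

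At $v=18$ and $v=19$ your congruence and parity bounds give only $\alpha\ge7$. The table asserts $\alpha_{\min}=15$ and $13$, that is, $33$ and $35$ blocks rather than the generic $29$ and $32$. Nothing in your toolkit yields these bounds. The paper does not derive them from local congruences at all; it cites a separate exact reduction and exhaustive search \cite{Kovar2025}. Your plan to handle $v=19$ ``directly'' would mean searching for a seven-triangle decomposition that does not exist, and $v=18$ is not flagged at all.

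The generic formulas also fail at $v=9$ and $v=10$: they would give $4$ and $7$ triangles, but the true values are $12$ and $9$. Here short arguments suffice, and you should supply them.
\begin{itemize}
\item For $v=10$: $(\alpha_x,\beta_x)\in\{(0,3),(3,1)\}$, so $\alpha=7$ forces exactly seven triangle vertices carrying a Fano plane. Then each 4-clique contains at most one of them, hence all three remaining vertices. That gives $\beta\le1$, whereas $\beta=4$ is required.
\item For $v=9$: $\beta_x\in\{0,2\}$. A vertex in two edge-disjoint 4-cliques has six distinct neighbours in them, so if $\beta>0$ there are $2\beta\ge7$ vertices in 4-cliques. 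Hence $\beta=4$ on eight points, each point in two blocks. These blocks pairwise share at most one point, yet $\sum_x\binom{\beta_x}{2}=8>\binom{4}{2}$, a contradiction.
\end{itemize}

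Two further steps need repair. First, for $v\equiv0\pmod3$ you stop at $\alpha\ge v/3$ and claim it is attained. For $v\equiv6,9\pmod{12}$ the forced parity of $\alpha$ is opposite to that of $v/3$, so $\alpha\ge(v+3)/3$, which is the table value. Moreover, 4-GDDs of type $3^{v/3}$ exist only for $v\equiv0,3\pmod{12}$. The paper attains $(v+3)/3$ by truncating a $2$-$(v+1,4,1)$ covering for $v\ge21$. An RBIBD$(v,3,1)$ is irrelevant here, since all its blocks are triangles.

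Second, for $v\equiv2\pmod3$, two repeated edge occurrences can also form $2K_2$. That gives four vertices of excess degree one, not ``at most three''. This case yields the same count $3\alpha\ge4+2(v-4)=2v-4$, so the bound survives, but it cannot be omitted: at $v=5$ it is the only excess type compatible with $(\alpha,\beta)=(2,1)$.

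Finally, your ``small counting argument'' for $v\equiv7,10\pmod{12}$ is simply this: each triangle vertex has $\alpha_x\ge3$, hence at least six distinct triangle-neighbours. So at least seven vertices carry triangles, and $3\alpha\ge21$.
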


\begin{center}
\scriptsize
\setlength{\tabcolsep}{2pt}
\begin{longtable}{c c c c c l}
\caption{Optimal $2$-$(v,\{3,4\},1)$ coverings with minimum excess.}\label{tab:k34}\\
\toprule
$v$ & $\xi_{3,4}(v)$ & $\alpha_{\min}$ & $\beta_{\max}$ & $\Cxi(v,\{3,4\},2)$ & source of construction\\
\midrule
\endfirsthead
\toprule
$v$ & $\xi_{3,4}(v)$ & $\alpha_{\min}$ & $\beta_{\max}$ & $\Cxi(v,\{3,4\},2)$ & source of construction\\
\midrule
\endhead
$1,4\pmod {12}$ & $0$ & $0$ & $\dfrac{v^2-v}{12}$ & $\dfrac{v^2-v}{12}$ & BIBD$(v,4,1)$\\[0.8ex]
$7,10\pmod {12}$ & $0$ & $7$ & $\dfrac{v^2-v-42}{12}$ & $\dfrac{v^2-v+42}{12}$ & PBD$(v,\{4,7^*\},1)$, $v\notin\{10,19\}$\\[0.8ex]
$0,3\pmod {12}$ & $0$ & $v/3$ & $\dfrac{v^2-3v}{12}$ & $\dfrac{v^2+v}{12}$ & $4$-GDD of type $3^{v/3}$\\[0.8ex]
$6,9\pmod {12}$ & $0$ & $\dfrac{v+3}{3}$ & $\dfrac{v^2-3v-6}{12}$ & $\dfrac{v^2+v+6}{12}$ & truncated $2$-$(v+1,4,1)$ covering, $v\ge21$\\[0.8ex]
$5,8\pmod {12}$ & $2$ & $\dfrac{2v-4}{3}$ & $\dfrac{v^2-5v+12}{12}$ & $\dfrac{v^2+3v-4}{12}$ & truncated $4$-GDD, $v\ne8$\\[0.8ex]
$2,11\pmod {12}$ & $2$ & $\dfrac{2v-1}{3}$ & $\dfrac{v^2-5v+6}{12}$ & $\dfrac{v^2+3v+2}{12}$ & truncated $4$-GDD\\[0.8ex]
$6$ & $3$ & $0$ & $3$ & $3$ & direct\\
$8$ & $2$ & $4$ & $3$ & $7$ & direct\\
$9$ & $0$ & $12$ & $0$ & $12$ & BIBD$(9,3,1)$\\
$10$ & $0$ & $9$ & $3$ & $12$ & direct\\
$18$ & $0$ & $15$ & $18$ & $33$ & exact reduction and search \cite{Kovar2025}\\
$19$ & $0$ & $13$ & $22$ & $35$ & exact reduction and search \cite{Kovar2025}\\
\bottomrule
\end{longtable}
\end{center}

\begin{proof}[Proof sketch of \cref{thm:k34-optimum}]
The constructions in \cref{tab:k34} give the upper bounds.  For the lower bounds, \eqref{eq:k34-objective} reduces the problem to minimizing $\alpha$.  The local equation
\[
        2\alpha_x+3\beta_x=v-1+d_\Xi(x)
\]
fixes $\alpha_x$ modulo $3$ once the excess type is known.  In the zero-excess classes this gives the lower bounds $\alpha\ge0$, $\alpha\ge7$, or $\alpha\ge v/3$ according to the residue class.  In the classes with two repeated edges, the excess multigraph is one of $P_3$, a double edge, or $2K_2$; checking these three possibilities gives $\alpha\ge(2v-4)/3$, with the stated ceiling correction in the classes $2,11\pmod {12}$.  The exceptional cases $K_{18}$ and $K_{19}$ require a separate exact reduction and search; we cite the full proof from \cite{Kovar2025}.
\end{proof}

\section{Minimum excess for 3-, 4-, and 5-cliques}

\begin{theorem}\label{thm:minexcess}
For every $v\ge 3$,
\[
\xi_{3,4,5}(v)=
\begin{cases}
0, & v\notin\{6,8\},\\
2, & v=8,\\
3, & v=6.
\end{cases}
\]
\end{theorem}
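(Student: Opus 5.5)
The upper bounds come almost entirely from earlier results, so most of the work is in the lower bounds for $v=6$ and $v=8$.

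\emph{Upper bounds.} Every $\{3,4\}$ covering is also a $\{3,4,5\}$ covering, so $\xi_{3,4,5}(v)\le\xi_{3,4}(v)$.
\begin{itemize}
\item By \cref{thm:k34-excess} this already gives excess $0$ for $v\equiv0,1\pmod3$ with $v\ne6$.
\item It gives excess $2$ for $v=8$ and excess $3$ for $v=6$.
\item For $v\equiv2\pmod3$ we need an exact $\{3,4,5\}$ decomposition, i.e.\ a PBD$(v,\{3,4,5\},1)$. I would invoke the classical closure result (Hanani; see \cite{hanani-1975,handbook-designs}) that PBD$(v,\{3,4,5\},1)$ exists for every $v\ge3$ except $v\in\{6,8\}$.
\end{itemize}
If a self-contained argument is preferred, I would split $v\equiv2,5,8,11,14,17\pmod{20}$ (and similar residues) and proceed as follows:
\begin{itemize}
\item take a $5$-GDD or a TD$(5,m)$;
\item truncate one group to size $0,1,2,3$, or $4$, leaving blocks of size $4$ and $5$;
\item fill the groups with small $\{3,4,5\}$-PBDs, adjoining one or two infinite points.
\end{itemize}
The small orders $5,11,14,17,20,\dots$ would be handled by direct constructions; for instance, $K_5$ is itself a block.

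\emph{Lower bound for $v=6$.} The local identity \eqref{eq:local} with $v-1=5$ and $d_\ex(x)=0$ has only the solution $(\alpha_x,\beta_x,\gamma_x)=(1,1,0)$. So every vertex would lie in exactly one $K_4$, which forces $4\beta=6$, a contradiction; hence $\xi\ge1$.

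To reach $\xi\ge3$, I rule out $|E(\ex)|\in\{1,2\}$ using the excess degrees.
\begin{itemize}
\item Vertices of excess degree $0$ still have $\beta_x=1$ and $\gamma_x=0$.
\item A vertex of excess degree $1$ satisfies $2\alpha_x+3\beta_x+4\gamma_x=6$, giving $(3,0,0)$, $(0,2,0)$, or $(1,0,1)$.
\item A vertex of excess degree $2$ gives $7$, so $(2,1,0)$ or $(0,1,1)$.
\end{itemize}
For $\xi=1$, exactly four vertices have $\beta_x=1$ and $\gamma_x=0$. A $K_5$ would need to contain at least three of these vertices, which is impossible since they have $\gamma_x=0$; hence $\gamma=0$. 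The $\beta_x$ then sum to $4\beta$, with the two endpoints contributing $0$ or $2$ each. This leaves a short case check, which I expect closes by an edge count against \eqref{eq:k34-count}: $3\alpha+6\beta=16$ is impossible modulo $3$. The case $\xi=2$ is similar: the excess multigraph is $P_3$, a double edge, or $2K_2$, and the edge identity $3\alpha+6\beta+10\gamma=17$ together with the degree patterns should eliminate each type.

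\emph{Lower bound for $v=8$.} For a decomposition, $2\alpha_x+3\beta_x+4\gamma_x=7$ forces $\beta_x=1$ at every vertex, with $(\alpha_x,\gamma_x)\in\{(2,0),(0,1)\}$. So the $K_4$'s partition the $8$ points and $\beta=2$. Every $K_5$ then contains three points of one $K_4$, and hence repeats an edge of that $K_4$; so $\gamma=0$. But then $3\alpha=28-12=16$, a contradiction.

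For $\xi=1$, there are two vertices of local value $8$, with solutions $(4,0,0)$, $(1,2,0)$, $(2,0,1)$, or $(0,0,2)$. The six other vertices keep $\beta_x=1$. I would combine the edge count $3\alpha+6\beta+10\gamma=29$ with the fact that a $K_5$ cannot contain three vertices of a single $K_4$ except through the lone excess edge.

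The main obstacle I expect is these two small finite exclusions, $\xi\ne1,2$ for $v=6$ and $\xi\ne1$ for $v=8$. If the hand case analysis becomes unwieldy, I would fall back on an exhaustive computer check of all $\{3,4,5\}$ coverings of $K_6$ and $K_8$ with excess at most $2$.
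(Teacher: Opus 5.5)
Your outline is sound, but as written it stops short on the cases that pin down the values $3$ and $2$. For $v=8$, excluding $\xi=1$ is only a plan (``I would combine\dots''). For $v=6$, excluding $\xi=2$ is only asserted (``should eliminate''), with a computer search held in reserve.

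The missing observation is a one-line global congruence. Since $\binom32,\binom42\equiv0$ and $\binom52=10\equiv1\pmod3$, the edge count $3\alpha+6\beta+10\gamma=\binom v2+\xi$ gives $\gamma\equiv\binom v2+\xi\pmod3$.
\begin{itemize}
\item For $v=6$, $\xi=2$, this forces $\gamma\equiv2\pmod3$, which is impossible because $10\gamma\le17$.
\item For $v=8$, $\xi=1$, it forces $\gamma\equiv2\pmod3$ with $\gamma\le2$. So $\gamma=2$ and $3\alpha+6\beta=9$, whence $\beta\le1$. But your six excess-free vertices each have $\beta_x=1$, so $4\beta\ge6$ and $\beta\ge2$.
\end{itemize}
Alternatively, your own local data give $4\beta=6+\beta_u+\beta_w$ with $\beta_u,\beta_w\in\{0,2\}$, so $\beta=2$ and $3\alpha+10\gamma=17$, which has no solution. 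With these lines added the proof is complete, and neither a case split by excess type nor a computer check is needed.

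Otherwise your route differs from the paper's in two places.

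\textbf{Upper bound.} You cite Hanani's determination of the PBD-closure of $\{3,4,5\}$ (all $v\ge3$ except $6,8$). That is legitimate and settles every $v\equiv2\pmod3$ uniformly. Your GDD-truncation sketch is too vague to count on its own, but it is not needed. The paper instead gives explicit designs: a PBD$(v,\{3,5^*\},1)$ for $v\equiv5\pmod6$, and a Kirkman triple system on $v-5$ points with five new points for $v\equiv2\pmod6$. Your citation also covers $v=14$, where the paper's ``same construction'' from an RBIBD$(9,3,1)$ cannot work literally, since that design has only four parallel classes.

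\textbf{Lower bounds.} The paper first fixes $\gamma$ by the congruence above. It then shows that the residual graph ($K_6+e-K_5$, $K_3\cup K_{3,5}$, or $K_{3,3}$) has no $\{K_3,K_4\}$ decomposition, relying on \cref{thm:k34-excess} for the $K_5$-free case at $v=6$. You work instead from \eqref{eq:local} at excess-free vertices. Those arguments are correct and somewhat cleaner:
\begin{itemize}
\item at $v=6$, a decomposition would need $4\beta=6$;
\item at $v=6$, $\xi=1$, the four excess-free vertices have $\gamma_x=0$, so no $K_5$ fits and the mod-$3$ count finishes;
\item at $v=8$, the $K_4$'s of a decomposition must partition the eight points, so every $K_5$ repeats an edge.
\end{itemize}
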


\begin{proof}
A $\{K_3,K_4\}$ decomposition of $K_v$ exists for every $v\equiv 0,1\pmod3$ except $v=6$, by standard BIBD and GDD constructions and small direct designs.  Hence $\xi_{3,4,5}(v)=0$ for those orders.

If $v\equiv5\pmod6$, then $v=5$ is trivial, and for $v\ge11$ a PBD$(v,\{3,5^*\},1)$ gives a decomposition into triangles and one quintuple.  If $v\equiv2\pmod6$ and $v\ge20$, take an RBIBD$(v-5,3,1)$; add five new points as a $5$-block and attach them to five parallel classes.  This gives a PBD$(v,\{3,4,5^*\},1)$ and hence a decomposition.  The order $14$ is obtained by the same construction from an RBIBD$(9,3,1)$.

It remains to consider $6$ and $8$.  For $v=6$, three $K_4$'s give excess $3$.  If a minimum-excess covering used a $K_5$ and had excess at most $3$, then $15+\xi=3\alpha+6\beta+10$, so $\xi\equiv1\pmod3$, hence $\xi=1$.  This leaves $(\alpha,\beta)=(2,0)$ or $(0,1)$, but the residual multigraph $K_6+e-K_5$ cannot be decomposed into triangles and 4-cliques.  Thus $\xi_{3,4,5}(6)=3$.

For $v=8$, a $\{K_3,K_4\}$ covering with two repeated edges gives $\xi_{3,4,5}(8)\le2$.  If $\xi=0$, then $3\alpha+6\beta+10\gamma=28$, forcing $\gamma=1$.  But $K_8-K_5=K_3\cup K_{3,5}$ has all degrees odd, so every vertex would have to occur in a $K_4$; this needs at least two $K_4$'s, whereas only one can be packed in $K_3\cup K_{3,5}$.  If $\xi=1$, then $3\alpha+6\beta+10\gamma=29$, so $\gamma=2$.  The two $K_5$'s share one edge, leaving six vertices of degree three in the residual graph; the residual would have to be decomposed only into $K_4$'s, impossible.  Hence $\xi_{3,4,5}(8)=2$.
\end{proof}

\section{General lower bounds for the secondary optimum}

For $v\notin\{6,8\}$, minimum excess is zero and the problem becomes a decomposition problem.

\begin{theorem}[Modular lower bound]\label{thm:modlb}
Let $v\notin\{6,8\}$.  Then
\[
\Cxi(v,\{3,4,5\},2)\ge
\begin{cases}
\left\lceil\dfrac{v^2-v}{20}\right\rceil, & v\equiv1\pmod4,\\[1ex]
\left\lceil\dfrac{v^2+v}{20}\right\rceil, & v\equiv0\pmod4,\\[1ex]
\left\lceil\dfrac{v^2+3v+2}{20}\right\rceil, & v\equiv3\pmod4,\\[1ex]
\left\lceil\dfrac{v^2+5v+2}{20}\right\rceil, & v\equiv2\pmod4.
\end{cases}
\]
\end{theorem}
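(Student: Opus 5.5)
The plan is to work with decompositions, since by \cref{thm:minexcess} the minimum excess is zero for $v\notin\{6,8\}$. By \cref{lem:blockcount} and \eqref{eq:delta}, $20c=v^2-v+2\delta$ with $\delta=7\alpha+4\beta$, so a lower bound on $c$ reduces to a lower bound on $\delta$. I will bound $\delta$ vertex by vertex. Double counting gives $\sum_x\alpha_x=3\alpha$ and $\sum_x\beta_x=4\beta$, hence
\[
\delta=\sum_x\Bigl(\tfrac73\alpha_x+\beta_x\Bigr).
\]
It therefore suffices to bound the local weight $w_x:=\tfrac73\alpha_x+\beta_x$ from below using the local identity \eqref{eq:local} with $d_\ex(x)=0$, namely $2\alpha_x+3\beta_x+4\gamma_x=v-1$, read modulo $4$.

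The four residue classes of $v$ modulo $4$ are handled as follows.
\begin{itemize}
\item If $v\equiv1\pmod4$, I only use $\delta\ge0$, which gives the stated bound directly.
\item If $v\equiv0\pmod4$, then $v-1$ is odd, so $\beta_x$ is odd and $w_x\ge1$. This gives $\delta\ge v$ and $20c\ge v^2+v$.
\item If $v\equiv3\pmod4$, then $2\alpha_x+3\beta_x\equiv2\pmod4$. Thus $\beta_x$ is even, and if $\beta_x=0$ then $\alpha_x$ is odd. Either $\beta_x\ge2$ or $\alpha_x\ge1$, so $w_x\ge2$, giving $\delta\ge2v$ and $20c\ge v^2+3v$.
\item If $v\equiv2\pmod4$, then $2\alpha_x+3\beta_x\equiv1\pmod4$, so $\beta_x$ is odd. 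The case $\beta_x=1,\alpha_x=0$ is impossible, because $3+4\gamma_x\not\equiv1\pmod4$. Hence either $\beta_x\ge3$, or $\beta_x\ge1$ and $\alpha_x\ge1$, and in both cases $w_x\ge3$. This gives $\delta\ge3v$ and $20c\ge v^2+5v$.
\end{itemize}

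The final step is the "$+2$" correction in the last two classes. Since $20c\equiv0\pmod4$, I compare with $v^2+3v=v(v+3)$ and $v^2+5v=v(v+5)$. When $v\equiv3\pmod4$, $v+3\equiv2\pmod4$ and $v$ is odd. When $v\equiv2\pmod4$, $v+5$ is odd and $v\equiv2\pmod4$. In both cases the product is $\equiv2\pmod4$. Therefore $20c\ge v^2+3v+2$, respectively $20c\ge v^2+5v+2$, and taking ceilings (as $c$ is an integer) gives the theorem.

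I expect the main obstacle to be the case analysis for $v\equiv2\pmod4$. There I must make sure that every residue pattern of $(\alpha_x,\beta_x)$ compatible with the local congruence has weight at least $3$; in particular, $\beta_x=1$ with odd $\alpha_x$ gives weight $10/3\ge3$. Everything else is routine.
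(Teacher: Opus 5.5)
Your proof is correct and is essentially the paper's argument. The per-vertex weight $w_x=\tfrac73\alpha_x+\beta_x$ is the vertex-by-vertex form of the paper's aggregate inequalities ($\beta\ge v/4$, $3\alpha+2\beta\ge v$, $6\alpha+4\beta\ge3v$, plus the extra $\alpha$ in $\delta=7\alpha+4\beta$), and both come from the same mod-$4$ reading of \eqref{eq:local}. Your ``$+2$'' step via $20c\equiv0\pmod4$ is equivalent to the paper's observation that $\binom v2$ odd forces $\alpha$, and hence $\delta$, to be odd.
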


\begin{proof}
The first case follows from \eqref{eq:blockcount}.  If $v\equiv0\pmod4$, then \eqref{eq:local} gives $\beta_x$ odd for every $x$, so $\beta\ge v/4$.  Substitution into \eqref{eq:blockcount} gives $c\ge(v^2+v)/20$.

If $v\equiv3\pmod4$, then $\beta_x$ is even.  Let $V_1$ be the vertices with $\beta_x\equiv0\pmod4$ and $V_2$ those with $\beta_x\equiv2\pmod4$.  Then vertices in $V_1$ have $\alpha_x$ odd, so $\alpha\ge |V_1|/3$, while $\beta\ge |V_2|/2$.  Hence $3\alpha+2\beta\ge v$.  Also $\binom v2$ is odd, so $\alpha$ is odd and $|V_1|\ge3$.  Therefore \eqref{eq:blockcount} gives $c\ge(v^2+3v+2)/20$.

If $v\equiv2\pmod4$, then $\beta_x$ is odd.  Let $V_1$ be the vertices with $\beta_x\equiv1\pmod4$ and $V_2$ those with $\beta_x\equiv3\pmod4$.  Then $\alpha\ge |V_1|/3$ and $\beta\ge(|V_1|+3|V_2|)/4$, so $6\alpha+4\beta\ge3v$.  Again $\alpha$ is odd and $|V_1|\ge3$, whence $c\ge(v^2+5v+2)/20$.
\end{proof}

\section{Exact residue classes via construction templates}

\begin{construction}[Truncating a $5$-BIBD]\label{con:bibd}
Let a BIBD$(v+s,5,1)$ exist with $s\in\{0,1,2,3\}$.  Deleting $s$ points gives the following decompositions of $K_v$:
\begin{center}
\small
\begin{tabular}{c c c c c}
\toprule
$s$ & $v\pmod {20}$ & $\alpha$ & $\beta$ & $\gamma$\\
\midrule
$0$ & $1,5$ & $0$ & $0$ & $(v^2-v)/20$\\
$1$ & $0,4$ & $0$ & $v/4$ & $(v^2-4v)/20$\\
$2$ & $3,19$ & $1$ & $(v-3)/2$ & $(v^2-7v+12)/20$\\
$3$ & $2,18$ & $3$ & $(3v-18)/4$ & $(v^2-10v+36)/20$\\
\bottomrule
\end{tabular}
\end{center}
\end{construction}

\begin{construction}[Hole filling]\label{con:hole}
If there is a PBD$(v,\{5,h^*\},1)$ and $K_h$ decomposes into $a$ triangles, $b$ 4-cliques and $g$ quintuples, then filling the hole gives a decomposition of $K_v$ with
\[
        \alpha=a,
        \qquad \beta=b,
        \qquad \gamma=g+\frac{\binom v2-\binom h2}{10}.
\]
We use this with $h=13$, where $K_{13}$ decomposes into $13$ copies of $K_4$, and with $h=17$, where $K_{17}$ decomposes into $16$ copies of $K_4$ and $4$ copies of $K_5$.
\end{construction}

\begin{theorem}\label{thm:exact}
For a minimum-excess $2$-$(v,\{3,4,5\},1)$ covering,
\[
\Cxi(v,\{3,4,5\},2)=
\begin{cases}
\dfrac{v^2-v}{20}, & v\equiv1,5\pmod {20},\\[1ex]
\dfrac{v^2+v}{20}, & v\equiv0,4\pmod {20},\\[1ex]
\dfrac{v^2+3v+2}{20}, & v\equiv3,19\pmod {20},\\[1ex]
\dfrac{v^2+5v+6}{20}, & v\equiv2,18\pmod {20}.
\end{cases}
\]
\end{theorem}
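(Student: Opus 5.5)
The plan is to match the lower bounds of \cref{thm:modlb} with the truncated $5$-BIBDs of \cref{con:bibd}, one residue class at a time. First I would check that no exceptional order occurs. The orders $6$ and $8$ are congruent to $6$ and $8$ modulo $20$, so they lie outside all eight classes in the statement. Hence \cref{thm:minexcess} gives minimum excess $0$ throughout, and the lower bounds of \cref{thm:modlb} apply. Each class modulo $20$ lies in a single class modulo $4$:
\begin{itemize}
\item $1,5\pmod{20}$ give $1\pmod4$;
\item $0,4\pmod{20}$ give $0\pmod4$;
\item $3,19\pmod{20}$ give $3\pmod4$;
\item $2,18\pmod{20}$ give $2\pmod4$.
\end{itemize}

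\textbf{Lower bounds.} In the first three cases the expressions $(v^2-v)/20$, $(v^2+v)/20$ and $(v^2+3v+2)/20$ are already integers on the stated classes, so no ceiling is needed. In the last case, $v\equiv2$ or $18\pmod{20}$ gives
\[
v^2+5v+2\equiv16\pmod{20},
\]
so $\lceil (v^2+5v+2)/20\rceil=(v^2+5v+6)/20$. This ceiling computation is the only point where the claimed value differs from the raw formula in \cref{thm:modlb}.

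\textbf{Upper bounds.} In each class I would take a BIBD$(v+s,5,1)$ with $s\in\{0,1,2,3\}$ chosen so that $v+s\equiv1,5\pmod{20}$. Such a BIBD exists by Hanani's theorem. I would then delete $s$ points as in \cref{con:bibd}. For the structure of the result:
\begin{itemize}
\item With $s=1$, each block through the deleted point becomes a $K_4$.
\item With $s=2$, the unique block through both deleted points becomes a triangle, and the blocks through exactly one of them become $K_4$'s.
\item With $s=3$, I must choose three points not lying in a common block. This is possible whenever $v+s>5$, and that holds here because $v\equiv2,18\pmod{20}$ forces $v\ge18$. Then the three pairwise blocks give three triangles and the rest give $K_4$'s.
\end{itemize}
The parameter counts in the table then follow by counting blocks through each deleted point.

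\textbf{Matching.} Summing $\alpha+\beta+\gamma$ from the table gives the values $(v^2-v)/20$, $(v^2+v)/20$, $(v^2+3v+2)/20$ and $(v^2+5v+6)/20$ in turn. For example, in the last class
\[
3+\frac{3v-18}{4}+\frac{v^2-10v+36}{20}=\frac{v^2+5v+6}{20}.
\]
These values equal the lower bounds, and equality proves the theorem.

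I would also check the smallest members of each class separately. For $v=3$ and $v=4$, the construction comes from BIBD$(5,5,1)$ and gives one block, which equals the bound. For $v=18,19,20$, BIBD$(21,5,1)$ works.

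\textbf{Main obstacle.} The step I expect to need the most care is the class $v\equiv2,18\pmod{20}$. There the bound requires the ceiling computation above, and the construction requires the non-collinearity of the three deleted points. Without that choice, deleting three collinear points would produce a block of size $2$ and alter the counts.
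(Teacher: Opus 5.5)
Your proposal is correct and is essentially the paper's proof. The lower bounds come from \cref{thm:modlb}, with the ceiling mattering only for $v\equiv2,18\pmod{20}$, where $v^2+5v+2\equiv16\pmod{20}$; the matching upper bounds come from truncating a BIBD$(v+s,5,1)$ as in \cref{con:bibd}, using Hanani's existence theorem, and your extra checks (that $6,8$ lie outside these classes, and that the three deleted points must be non-collinear when $s=3$) are details the paper leaves implicit.
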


\begin{proof}
The lower bounds are \cref{thm:modlb}, with ceilings evaluated in the stated residue classes.  The matching decompositions are supplied by \cref{con:bibd} and the existence theorem for BIBD$(n,5,1)$.
\end{proof}

\section{Elimination tools for the remaining residue classes}

\begin{lemma}[Master elimination tests]\label{lem:elim}
Let $K_v$ have a $\{K_3,K_4,K_5\}$ decomposition with $c=\alpha+\beta+\gamma$, and put $\delta=10c-\binom v2$.  Then:
\begin{enumerate}[label=(\roman*)]
\item $7\alpha+4\beta=\delta$;
\item
\[
\left\lceil\frac{v^2-v-12c}{8}\right\rceil\le\gamma\le
\left\lfloor\frac{v^2-v-6c}{14}\right\rfloor;
\]
\item $\gamma\equiv\binom v2\pmod3$;
\item if $v$ is odd and $\beta>0$, then $\beta\ge5$;
\item if $v\equiv1\pmod4$, $\alpha=0$, and $\beta>0$, then $\beta\ge13$.
\end{enumerate}
\end{lemma}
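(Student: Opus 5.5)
Items (i)--(iii) come from the global edge count, and items (iv)--(v) from the local equation \eqref{eq:local} combined with the fact that two blocks of an edge-disjoint decomposition share at most one vertex.

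For (i), I would note that \eqref{eq:delta} is exactly \eqref{eq:blockcount} with $|E(\ex)|=0$. For (ii), the starting point is the edge count
\[
3\alpha+6\beta+10\gamma=\binom v2,
\]
together with $\alpha+\beta=c-\gamma$. Since $3(\alpha+\beta)\le 3\alpha+6\beta\le 6(\alpha+\beta)$, we get
\[
3c+7\gamma\le\binom v2\le 6c+4\gamma .
\]
Solving each inequality for $\gamma$ gives
\[
\frac{v^2-v-12c}{8}\le\gamma\le\frac{v^2-v-6c}{14}.
\]
Since $\gamma$ is an integer, we may take the ceiling on the left and the floor on the right. For (iii), I would reduce the same edge count modulo $3$; this gives $10\gamma\equiv\gamma\equiv\binom v2\pmod 3$.

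For (iv) and (v), I will use \eqref{eq:local} with $d_\ex(x)=0$, that is, $2\alpha_x+3\beta_x+4\gamma_x=v-1$ at every vertex. If $v$ is odd, then $v-1$ is even, so $\beta_x$ is even. Hence every vertex lying in some $K_4$ lies in at least two $K_4$'s.

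If moreover $v\equiv1\pmod 4$ and $\alpha=0$, the equation reads $3\beta_x+4\gamma_x=v-1\equiv0\pmod4$. This forces $\beta_x\equiv0\pmod 4$, so every vertex in some $K_4$ lies in at least four $K_4$'s.

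The key structural observation is edge-disjointness: two distinct $K_4$'s share at most one vertex. Fix one $K_4$ block $B$, which exists because $\beta>0$. Each vertex $y\in B$ lies in at least $\beta_y-1$ further $K_4$'s. No such $K_4$ can contain two vertices of $B$, since it would then share an edge with $B$. Therefore the further $K_4$'s through different vertices of $B$ are pairwise distinct, and also distinct from $B$. This gives $\beta\ge 1+4(\beta_y-1)$ with the minimal value of $\beta_y$.

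In case (iv) this yields $\beta\ge1+4\cdot1=5$. In case (v) it yields $\beta\ge1+4\cdot3=13$.

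I do not expect a real obstacle. The only point needing care is the distinctness count in (iv) and (v), which rests entirely on the fact that a $K_4$ meeting $B$ in two vertices would repeat an edge of $B$. That is excluded because the excess is zero in a decomposition.
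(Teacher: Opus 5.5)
Your proof is correct. Parts (i)--(iii) match the paper's argument, but for (iv) and (v) you take a different and more elementary route.

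The paper lets $n$ be the number of vertices lying in some $K_4$. From $\sum_x\beta_x=4\beta$ and $\beta_x\ge2$ (resp.\ $\beta_x\ge4$) on that support it gets $\beta\ge n/2$ (resp.\ $\beta\ge n$). It then invokes the tabulated packing numbers $D(n,4,2)$: the condition $D(n,4,2)\ge n/2$ (resp.\ $\ge n$) forces $n\ge10$ (resp.\ $n\ge13$).

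You instead fix one 4-clique $B$ and use edge-disjointness to see that the further 4-cliques through the four vertices of $B$ are pairwise distinct. This gives $\beta\ge 1+\sum_{y\in B}(\beta_y-1)$ directly. Your argument is self-contained and needs no external design tables. It also loses nothing here, since both bounds are attained by edge-disjoint 4-cliques. For (iv), take the ten edges of $K_5$ as points with one block per vertex star. For (v), take the projective plane of order $3$.

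What the paper's support-counting buys is extra structural information: a lower bound on the number of vertices covered by 4-cliques. The same kind of counting reappears in \cref{lem:forbidden917}. It also scales better. If one had $\beta_x\ge m$ on the support, the support would need at least $3m+1$ vertices, giving $\beta\ge m(3m+1)/4$. That beats your linear bound $4m-3$ for every $m\ge5$.
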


\begin{proof}
Part (i) is \eqref{eq:delta}.  For (ii), use
\[
3(c-\gamma)\le3\alpha+6\beta=\binom v2-10\gamma\le6(c-\gamma).
\]
Part (iii) follows from $\binom v2-10\gamma=3\alpha+6\beta\equiv0\pmod3$.  For (iv), $v$ odd implies $\beta_x$ even at each vertex.  Thus any vertex incident with a 4-clique is incident with at least two 4-cliques.  If $n$ vertices are incident with 4-cliques, then $\beta\ge n/2$.  The packing values for $D(n,4,2)$ show that $D(n,4,2)\ge n/2$ first occurs at $n=10$, so $\beta\ge5$ \cite{handbook-designs}.  Part (v) is the same argument with $\beta_x\equiv0\pmod4$, giving first possible support size $13$.
\end{proof}

\begin{lemma}\label{lem:forbidden13}
If $v\equiv13\pmod {20}$, then no decomposition of $K_v$ has $(\alpha,\beta)=(2,7)$.
\end{lemma}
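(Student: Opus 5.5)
The plan is to exploit the local identity \eqref{eq:local} modulo $4$, exactly as in the proof of \cref{thm:modlb} and \cref{lem:elim}(iv)--(v). Then I combine the resulting parity structure with a simple pair-counting bound on the seven $4$-cliques, which pairwise share at most one point.

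\textbf{Step 1 (local parities).} Suppose $K_v$, $v\equiv13\pmod{20}$, has a decomposition with $\alpha=2$ and $\beta=7$. Since $v$ is odd, $\beta_x$ is even for every $x$. Since $v-1\equiv 0\pmod 4$, \eqref{eq:local} with $d_\ex(x)=0$ gives $2\alpha_x+3\beta_x\equiv0\pmod4$. Hence $\beta_x\equiv2\pmod4$ exactly when $\alpha_x$ is odd, and $\beta_x\equiv0\pmod4$ otherwise. Let $W$ be the set of vertices with $\alpha_x$ odd.

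\textbf{Step 2 (the two triangles).} In a decomposition the two triangles share at most one vertex. If they are disjoint, then $|W|=6$, with $\alpha_x=1$ on $W$. If they meet in one vertex, that vertex has $\alpha_x=2$, so $|W|=4$. Every $x\in W$ has $\beta_x\in\{2,6,10,\dots\}$. Every $x\notin W$ has $\beta_x\in\{0,4,8,\dots\}$.

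\textbf{Step 3 (counting).} The incidence count gives $\sum_x\beta_x=4\beta=28$. Since two $4$-cliques of a decomposition meet in at most one vertex, counting pairs of $4$-cliques through a common vertex gives
\[
\sum_x\binom{\beta_x}{2}\le\binom72=21 .
\]
Now I minimize $\sum_x\binom{\beta_x}{2}$ subject to the constraints from Step 2 and total $28$.

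When $|W|=6$, the vertices of $W$ contribute at least $12$ to the sum $\sum_x\beta_x$. The remaining incidences form a multiple of $4$ carried by vertices of degree at least $4$. The cheapest choice is six $2$'s and four $4$'s, giving $6\cdot1+4\cdot6=30>21$. Replacing a $2$ by a $6$ costs $15$ instead of $1$ and removes at most one $4$, so it only increases the sum; for example, one $6$, five $2$'s and three $4$'s give $15+5+18=38$. Degrees $8$ or more are even worse by convexity.

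When $|W|=4$, the cheapest choice is four $2$'s and five $4$'s, giving $4+30=34>21$, and again larger degrees only increase the sum.

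In every case the bound $21$ is violated, which is the desired contradiction.

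The main obstacle is making the minimization in Step 3 airtight rather than ad hoc. I would argue via convexity of $\binom{t}{2}$: for a fixed residue class mod $4$, the smallest admissible values are optimal, and a short exchange argument shows that raising any vertex's degree by $4$ cannot lower the total. Step 2's claim that triangles cannot share an edge is immediate from zero excess.
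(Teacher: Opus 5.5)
Your proof is correct, and it takes a different route from the paper's.

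Both arguments start from the same mod-$4$ split: $\beta_x\equiv2\pmod4$ exactly where $\alpha_x$ is odd. From there the paper argues structurally, in two steps.
\begin{enumerate}[label=(\alph*)]
\item It rules out $\beta_x=6$ at a triangle vertex $x$. The six 4-cliques through $x$ reach $18$ further vertices, each of which needs a second 4-clique, while only one 4-clique remains.
\item This pins the $\beta$-degrees to six $2$'s and four $4$'s, or four $2$'s and five $4$'s. All $2\cdot3+7\cdot6=48$ triangle and 4-clique edges would then sit on at most ten vertices, exceeding $\binom{10}{2}=45$.
\end{enumerate}

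You replace both steps with the single intersection inequality $\sum_x\binom{\beta_x}{2}\le\binom{\beta}{2}$. It penalizes large local degrees automatically and does not use the triangle edges at all.

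The minimization you flagged as the main obstacle closes in one line. Write $\beta_x=2+4k_x$ on $W$ and $\beta_x=4m_x$ off $W$, with $k_x,m_x\ge0$ integers. Then $\binom{2+4k}{2}=1+6k+8k^2\ge1+14k$ and $\binom{4m}{2}=8m^2-2m\ge6m$. Since $\sum_x\beta_x=28$ gives $\sum_x(k_x+m_x)=7-|W|/2$, you get
$\sum_x\binom{\beta_x}{2}\ge|W|+6\sum_x(k_x+m_x)=42-2|W|\ge30>21$.
Alternatively, note that $\beta_x\le\beta=7$, so only the values $2,6$ on $W$ and $0,4$ off $W$ can occur.

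What your approach buys is uniformity. The same computation gives $30-2|W|\ge18>\binom52$ for $(\alpha,\beta)=(2,5)$ and $60-2|W|\ge48>\binom{10}{2}$ for $(2,10)$. It would therefore also dispose of \cref{lem:forbidden917} without the separate support-size case analysis used there. The paper's route is more local and makes the forced degree sequence explicit before the final count.
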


\begin{proof}
Let $U$ be the set of vertices incident with a triangle or 4-clique.  For $x\in U$,
$2\alpha_x+3\beta_x\equiv0\pmod4$, hence $\beta_x$ is even.  Split $U=V_1\cup V_2$ according as $\beta_x\equiv0$ or $2\pmod4$.  With $\alpha=2$ and $\beta=7$, vertices in $V_2$ have $\alpha_x=1$ and $\beta_x\in\{2,6\}$, while vertices in $V_1$ have $\alpha_x\in\{0,2\}$ and $\beta_x\in\{0,4\}$.

No vertex in $V_2$ can have $\beta_x=6$: the six 4-cliques through such a vertex create $18$ further incidences that would each have to be supported by another 4-clique, but only one 4-clique remains.  Hence each vertex of $V_2$ has $\beta_x=2$.  If the two triangles meet, then $|V_2|=4$ and five further vertices have $\beta_x=4$; if the triangles are disjoint, then $|V_2|=6$ and four further vertices have $\beta_x=4$.  In both cases all triangle and 4-clique edges are supported on at most ten vertices, contradicting
\[
        2\binom32+7\binom42=48>\binom{10}{2}=45.
\]
\end{proof}

\begin{lemma}\label{lem:forbidden917}
If $v\equiv9,17\pmod {20}$, then no decomposition of $K_v$ has $(\alpha,\beta)=(2,5)$ or $(2,10)$.
\end{lemma}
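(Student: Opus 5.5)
We argue as in \cref{lem:forbidden13}, using the local equation \eqref{eq:local} with zero excess. If $v\equiv 9,17\pmod{20}$, then $v-1\equiv 0\pmod 4$, so $2\alpha_x+3\beta_x\equiv 0\pmod 4$ at every vertex. Hence $\beta_x$ is even, and moreover $\beta_x\equiv 2\pmod 4$ exactly when $\alpha_x$ is odd. With $\alpha=2$ this gives two kinds of vertex in the set $U$ of vertices incident with a triangle or a $4$-clique. A vertex lying in exactly one triangle has $\beta_x\in\{2,6,10,\dots\}$. Every other vertex of $U$ has $\beta_x\in\{0,4,8,\dots\}$, and $\beta_x\ge 4$ unless it lies in both triangles. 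We also always have $\beta_x\le\beta$, and the $4$-clique incidences sum to $4\beta$.

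We then split according to whether the two triangles are disjoint (six vertices with $\alpha_x=1$) or share a vertex (four vertices with $\alpha_x=1$ and one with $\alpha_x=2$). In each case we bound $|U|$ by distributing the $4\beta$ incidences as economically as possible. Let $H$ be the graph of all edges covered by triangles and $4$-cliques, so that $|E(H)|=6+6\beta$ and $H$ lives on $U$.

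For $(\alpha,\beta)=(2,5)$ we have $\beta_x\le5$, so $\beta_x=2$ on vertices with $\alpha_x=1$, and $\beta_x\in\{0,4\}$ elsewhere. In the disjoint case, $6\cdot 2=12$ incidences leave $8$, giving at most two further vertices and $|U|\le 8$. In the meeting case, $4\cdot2=8$ incidences leave $12$, giving at most three vertices with $\beta_x=4$ (one of which may be the common vertex), so $|U|\le 8$. In both cases $|E(H)|=36>\binom82=28$, which is a contradiction.

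For $(\alpha,\beta)=(2,10)$ we have $|E(H)|=66=\binom{12}{2}$, so $|U|\ge 12$.

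If $|U|=12$, then $H=K_{12}$. This would be a $\{K_3,K_4\}$ decomposition of $K_{12}$, where $2\alpha_x+3\beta_x=11$ is odd. But $\beta_x$ is even, so the left side is even, which is a contradiction.

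Larger $U$ only arises when the cheap choices $\beta_x=2$ (on triangle vertices) and $\beta_x=4$ (elsewhere) are used. The disjoint case gives at most $6+7=13$ vertices, and the meeting case at most $5+8=13$ vertices or fewer. For $|U|=13$ the incidence pattern is then forced, for example six vertices with $(\alpha_x,\beta_x)=(1,2)$ and seven with $\beta_x=4$. The latter have $H$-degree $12=|U|-1$, so they are adjacent in $H$ to everything. Then each triangle vertex has $H$-degree at least $7+2=9$, exceeding its actual degree $2+6=8$, a contradiction. The meeting-case pattern at $|U|=13$ is handled by the same degree count.

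The main obstacle is the bookkeeping in the $(2,10)$ case. There we must enumerate every incidence distribution that yields $|U|\ge 12$, including non-minimal ones such as some $\beta_x=6$ or $8$ or the common vertex having $\beta_x=0$. For each distribution we compare the forced $H$-degrees $2\alpha_x+3\beta_x$ with the adjacency forced by vertices of full degree $|U|-1$, or appeal to $|E(H)|\le\binom{|U|}2$. We expect only a handful of patterns to survive the edge count, each killed by this degree comparison.
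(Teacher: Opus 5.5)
Your proof is correct, but it takes a somewhat different route from the paper's.

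For $(2,5)$, the paper shows directly that no vertex has $\beta_x=4$. Such a vertex's twelve 4-clique neighbours would each need a second 4-clique, and only one 4-clique remains. So all $\beta_x\in\{0,2\}$, which gives $4\beta\le 12$. You instead bound the support by $|U|\le 8$ and use $36>\binom82$, which is the method of the paper's proof of \cref{lem:forbidden13}.

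For $(2,10)$, the paper works with $S=\{x:\beta_x>0\}$ and the per-vertex bound $\beta_x\le\lfloor(|S|-1)/3\rfloor$, which gives $|S|\ge13$. This kills the meeting case immediately, since the incidence count gives $|S|\le 12$. The forced disjoint pattern is then excluded by counting $\binom72+7\cdot 6=63>60$ edges. You only get $|U|\ge 12$ from $|E(H)|=66=\binom{12}{2}$. So you need an extra parity step: every $H$-degree $2\alpha_x+3\beta_x$ is even, while $K_{12}$ is $11$-regular. You must then kill both the meeting and the disjoint patterns at $|U|=13$. Your full-degree comparison does this, and it is just the local form of the paper's $63>60$ count.

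The paper's neighbour bound is a little sharper and saves a case. Your argument uses only the global edge count plus parity, in the same style as \cref{lem:forbidden13}.

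Your closing paragraph is unnecessary. $|U|=12$ is excluded for every incidence distribution, and $|U|=13$ forces equality in the incidence count in both the meeting and disjoint cases. So no non-minimal patterns remain to enumerate, and the proof as written is already complete.
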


\begin{proof}
For $(2,5)$, the same parity split as in \cref{lem:forbidden13} applies.  A vertex with $\beta_x=4$ would force the twelve other incidences in its four 4-cliques to be supported by the single remaining 4-clique, impossible.  Thus every vertex incident with a 4-clique has $\beta_x=2$.  Depending on whether the two triangles meet or not, this gives $\beta=2$ or $3$, not $5$.

For $(2,10)$, let $S=\{x:\beta_x>0\}$.  Since each 4-clique through $x$ uses three distinct neighbours in $S$,
\[
        40=\sum_{x\in S}\beta_x\le |S|\left\lfloor\frac{|S|-1}{3}\right\rfloor,
\]
so $|S|\ge13$.  If the two triangles meet, four vertices have $\alpha_x=1$ and require $\beta_x\ge2$, while all other vertices in $S$ have $\beta_x\ge4$; hence $40\ge8+4(|S|-4)$, giving $|S|\le12$, contradiction.  If the triangles are disjoint, the same argument gives $|S|\le13$, so $|S|=13$.  Then seven vertices of $S$ have $\beta_x=4$ and six have $\beta_x=2$.  The edges from the seven high-degree vertices to $S$ number $\binom72+7\cdot6=63$, but ten 4-cliques contain only $60$ edges, contradiction.
\end{proof}

\section{Bounds in the remaining residue classes}

\begin{theorem}\label{thm:bounds}
For minimum-excess $2$-$(v,\{3,4,5\},1)$ coverings, the following hold.
\begin{center}
\begin{tabular}{c c}
\toprule
Residue class & Value or bounds for $\Cxi(v,\{3,4,5\},2)$\\
\midrule
$v\equiv9,17$, $v\ge89$ or $v=69$ & $\dfrac{v^2-v+128}{20}$\\[1.2ex]
$v\equiv13$, $v\ge53$ & $\dfrac{v^2-v+84}{20}\le \Cxi\le\dfrac{v^2-v+104}{20}$\\[1.2ex]
$v\equiv12$, $v\ge52$ & $\dfrac{v^2+v+24}{20}\le \Cxi\le\dfrac{v^2+v+104}{20}$\\[1.2ex]
$v\equiv8,16$, $v\ge88$ & $\dfrac{v^2+v+8}{20}\le \Cxi\le\dfrac{v^2+v+128}{20}$\\[1.2ex]
$v\equiv11$, $v\ge51$ & $\dfrac{v^2+3v+6}{20}\le \Cxi\le\dfrac{v^2+3v+106}{20}$\\[1.2ex]
$v\equiv7,15$, $v\ge87$ & $\dfrac{v^2+3v+10}{20}\le \Cxi\le\dfrac{v^2+3v+130}{20}$\\[1.2ex]
$v\equiv10$, $v\ge50$ & $\dfrac{v^2+5v+10}{20}\le \Cxi\le\dfrac{v^2+5v+110}{20}$\\[1.2ex]
$v\equiv6,14$, $v\ge86$ & $\dfrac{v^2+5v+14}{20}\le \Cxi\le\dfrac{v^2+5v+134}{20}$\\
\bottomrule
\end{tabular}
\end{center}
The lower bounds hold whenever the minimum excess is zero; the restrictions on $v$ are for the upper-bound constructions.
\end{theorem}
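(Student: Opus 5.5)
The proof splits into lower bounds and upper bounds. Since $v\notin\{6,8\}$ in every listed class, \cref{thm:minexcess} gives zero excess. By \eqref{eq:delta} and \cref{lem:elim}(i), determining $\Cxi$ then amounts to finding the smallest attainable value of $\delta=7\alpha+4\beta$ with $\delta\equiv-\binom v2\pmod{10}$. The upper bounds come from explicit constructions, and the deviations $104,128,\dots$ are exactly the values of $\delta$ that those constructions realise.

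\emph{Lower bounds.} I would work one residue class modulo $20$ at a time. First I fix the residue of $\binom v2$ modulo $10$, which fixes the residue of $\delta$ modulo $10$. Next I use the local equation \eqref{eq:local}, exactly as in \cref{thm:modlb}, to get the parity and mod-$4$ constraints on $\beta_x$ and $\alpha_x$. These give the baseline inequalities, for example $\beta\ge v/4$, $3\alpha+2\beta\ge v$, or $6\alpha+4\beta\ge3v$. I then list the small pairs $(\alpha,\beta)$ whose value $7\alpha+4\beta$ lies in the right residue class and below the claimed bound, and eliminate them one by one:
\begin{itemize}
\item by the congruence \cref{lem:elim}(iii) on $\gamma$;
\item by the support bounds \cref{lem:elim}(iv),(v), namely $\beta\ge5$ and $\beta\ge13$;
\item by the forbidden configurations of \cref{lem:forbidden13} and \cref{lem:forbidden917}.
\end{itemize}
For $v\equiv9,17$, where $v\equiv1\pmod4$ and $\binom v2\equiv6$ or $\equiv 6\pmod{10}$, the target is to show that no $\delta<128$ is attainable. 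The candidates include $\alpha=0$ with $4\beta\equiv4\pmod{10}$ (so $\beta\equiv1\pmod5$), which is forced up by (v) and (iii), together with $(2,5)$ and $(2,10)$, which are removed by \cref{lem:forbidden917}. For $v\equiv13$ the analogous list is cut off at $84$ using \cref{lem:forbidden13}. The even and $v\equiv3\pmod 4$ classes need only a modest improvement over \cref{thm:modlb}: I show the extremal configuration (for instance all $\beta_x=1$ with $\alpha=0$) is incompatible with the residue of $\delta$ modulo $10$, and then take the next admissible value.

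\emph{Upper bounds.} I use \cref{con:hole} with a PBD$(v,\{5,h^*\},1)$, $h\in\{13,17\}$, together with truncations in the style of \cref{con:bibd} applied to such PBDs. The residues $9,13,17\pmod{20}$ arise from filling a hole of size $17$ or $13$. For $v\equiv17$, for instance, the $K_{17}$ filling with $\beta=16,\gamma=4$ gives $\delta=64$ before the residue adjustment, so I would check each class's bookkeeping; the stated constants suggest different fillings, such as $(\alpha,\beta)=(0,32)$ giving $128$. For $v\equiv12,8,16,11,7,15,10,6,14$, I delete one, two or three points from a PBD of order $v+s$ of the previous type. This produces extra $K_4$'s and $K_3$'s from the blocks through the deleted points, and the constants $104,128$ shift by the usual $+v$, $+3v$, $+5v$ corrections. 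The lower limits on $v$ ($\ge51$, $\ge87$, $69$, and so on) are exactly where the PBD$(n,\{5,h^*\},1)$ existence results from \cite{handbook-designs} apply.

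\emph{Main obstacle.} The hardest part is the exact value for $v\equiv9,17$, where the lower bound must match $128$. That needs a complete enumeration of the admissible $(\alpha,\beta)$ with $7\alpha+4\beta<128$, and each must be killed. Pairs with $\alpha\ge1$ and moderate $\beta$, such as $(2,15)$ or $(4,\dots)$, are not covered by the stated lemmas. For these I would first try the support-counting argument of \cref{lem:forbidden917}: bound $\sum\beta_x=4\beta$ by $|S|\lfloor(|S|-1)/3\rfloor$ and compare the edge counts on the high-$\beta_x$ vertices. If that is too weak, I would fall back on the local mod-$4$ split of \cref{lem:forbidden13}.
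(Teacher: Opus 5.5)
Your overall approach is the paper's: hole-filling of a PBD$(v,\{5,h^*\},1)$ with $h\in\{13,17\}$ plus truncations for the upper bounds, and for the lower bounds an enumeration of admissible $\delta$ eliminated by \cref{lem:elim}, \cref{lem:forbidden13} and \cref{lem:forbidden917}. However, a factor-of-two bookkeeping error makes your lower-bound targets false. Since $\delta=10c-\binom v2$, a value $c=(v^2-v+K)/20$ corresponds to $\delta=K/2$, not $\delta=K$.

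\emph{Classes $9,17$.} The claim is therefore $\delta\ge64$. The $K_{17}$ filling you computed, $(\alpha,\beta)=(0,16)$ with $\delta=64$, is exactly the construction behind $(v^2-v+128)/20$. Your alternative $(0,32)$ is impossible ($32$ copies of $K_4$ have $192>\binom{17}{2}=136$ edges), and it would give $(v^2-v+256)/20$ anyway. Your target ``no $\delta<128$ is attainable'' cannot be proved, because it would have to exclude the realised pair $(0,16)$. The same happens for $v\equiv13$: cutting off at $84$ would exclude $(0,13)$ with $\delta=52$, which the hole-$13$ filling realises. Your ``main obstacle'' with pairs such as $(2,15)$ ($\delta=74$) is an artefact of this error.

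The actual work is much smaller. For $v\equiv9,17$ you must exclude $\delta\in\{4,14,24,34,44,54\}$, that is, the pairs $(0,1),(2,0),(0,6),(2,5),(0,11),(4,4),(2,10),(6,3)$.
\begin{itemize}
\item Part (iv) of \cref{lem:elim} kills $(0,1),(4,4),(6,3)$.
\item Part (v) kills $(0,6),(0,11)$.
\item \cref{lem:forbidden917} kills $(2,5),(2,10)$.
\item $(2,0)$, which your list omits, dies because $2\alpha_x+4\gamma_x\equiv0\pmod4$ would put every triangle vertex in an even number of triangles.
\end{itemize}
\emph{Class $13$.} You exclude $\delta\in\{2,12,22,32\}$, i.e.\ $(0,3),(2,2),(0,8),(4,1)$, all killed by (iv) or (v). \cref{lem:forbidden13} does not raise this bound at all. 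It only removes $(2,7)$ at $\delta=42$, leaving $(6,0)$ open, which is why the bound stops at $(v^2-v+84)/20$.

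\emph{Class $12$.} There is a second, smaller gap here. For the classes $8,16,11,7,15,10,6,14$ the stated lower bounds are exactly the ceilings in \cref{thm:modlb}, so no improvement is needed. Only $v\equiv12$ needs one, and your proposed step does not deliver it. You note that the configuration with $\alpha=0$ and all $\beta_x=1$, i.e.\ $\delta=v$, violates $\delta\equiv4\pmod{10}$. But that only reproduces the ceiling $\delta\ge v+2$, i.e.\ $(v^2+v+4)/20$. To reach $(v^2+v+24)/20$ you must also exclude $\delta=v+2$. Since $4\beta\ge v$, the equation $7\alpha+4\beta=v+2$ forces $\alpha=0$, and then $\beta=(v+2)/4\notin\ZZ$ because $4\mid v$.
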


\begin{proof}
The upper bounds come from \cref{con:hole} and its truncations.  The following table gives the parameters of the constructions.
\begin{center}
\scriptsize
\setlength{\tabcolsep}{3pt}
\begin{tabular}{c c c c c}
\toprule
$v\pmod {20}$ & hole & trunc. & $(\alpha,\beta,\gamma)$ & $c$\\
\midrule
$13$ & $13$ & $0$ & $(0,13,(v^2-v-156)/20)$ & $(v^2-v+104)/20$\\
$9,17$ & $17$ & $0$ & $(0,16,(v^2-v-192)/20)$ & $(v^2-v+128)/20$\\
$12$ & $13$ & $1$ & $(4,(v+24)/4,(v^2-4v-96)/20)$ & $(v^2+v+104)/20$\\
$8,16$ & $17$ & $1$ & $(0,(v+64)/4,(v^2-4v-192)/20)$ & $(v^2+v+128)/20$\\
$11$ & $13$ & $2$ & $(5,(v+9)/2,(v^2-7v-84)/20)$ & $(v^2+3v+106)/20$\\
$7,15$ & $17$ & $2$ & $(1,(v+29)/2,(v^2-7v-180)/20)$ & $(v^2+3v+130)/20$\\
$10$ & $13$ & $3$ & $(7,(3v+6)/4,(v^2-10v-60)/20)$ & $(v^2+5v+110)/20$\\
$6,14$ & $17$ & $3$ & $(3,(3v+46)/4,(v^2-10v-156)/20)$ & $(v^2+5v+134)/20$\\
\bottomrule
\end{tabular}
\end{center}

For $v\equiv13\pmod {20}$, the possible values of $\delta$ below the upper bound are $2,12,22,32,42,52$.  If $\beta>0$, then \cref{lem:elim} gives $\delta\ge7\alpha+20$.  The values $22$ and $32$ lead to $(\alpha,\beta)=(2,2)$, $(0,8)$, or $(4,1)$, all impossible by \cref{lem:elim}.  If $\beta=0$, then $\delta=7\alpha$, so the first possible value is $42$.  Thus $\delta\ge42$, equivalent to the lower bound $(v^2-v+84)/20$.  The additional configuration $(\alpha,\beta)=(2,7)$ at $\delta=42$ is ruled out by \cref{lem:forbidden13}.

For $v\equiv9,17\pmod {20}$, the possible smaller values are $\delta=4,14,24,34,44,54$.  If $\beta=0$, then $\delta=14$ and $\alpha=2$, but the local congruence $2\alpha_x+4\gamma_x\equiv0\pmod4$ would require each vertex incident with a triangle to lie in an even number of triangles, impossible for two triangles.  Hence $\beta>0$.  The master tests eliminate $\delta=24$ and $44$; $\delta=34$ gives $(2,5)$; and $\delta=54$ gives $(2,10)$ or $(6,3)$, the latter violating $\beta\ge5$.  The pairs $(2,5)$ and $(2,10)$ are excluded by \cref{lem:forbidden917}.  Thus $\delta\ge64$, which matches the construction.

The other lower bounds are \cref{thm:modlb}, except for $v\equiv12\pmod {20}$.  There the modular lower bound gives $(v^2+v+4)/20$.  If equality held, then the proof of \cref{thm:modlb} and \eqref{eq:blockcount} force $\alpha=0$.  Solving
\[
\beta+\gamma=\frac{v^2+v+4}{20},\qquad 6\beta+10\gamma=\binom v2
\]
gives $\beta=(v+2)/4\notin\ZZ$, contradiction.  Hence one more block is needed.
\end{proof}

\section{Small orders}\label{sec:small33}

Two triples are called \emph{independent} if they are vertex-disjoint.

\begin{theorem}\label{thm:small}
For $3\le v\le17$, the values are as follows.
\begin{center}
\begin{tabular}{c c c l}
\toprule
$v$ & $\xi_{3,4,5}(v)$ & $\Cxi(v,\{3,4,5\},2)$ & construction or lower-bound reason\\
\midrule
3 & 0 & 1 & one $K_3$\\
4 & 0 & 1 & one $K_4$\\
5 & 0 & 1 & one $K_5$\\
6 & 3 & 3 & three $K_4$'s\\
7 & 0 & 7 & BIBD$(7,3,1)$\\
8 & 2 & 7 & ILP by excess type; optimum $(4,3,0)$\\
9 & 0 & 12 & BIBD$(9,3,1)$\\
10 & 0 & 12 & minimum $\{K_3,K_4\}$ decomposition\\
11 & 0 & 16 & one $K_5$ and $15$ triangles\\
12 & 0 & 13 & minimum $\{K_3,K_4\}$ decomposition\\
13 & 0 & 13 & BIBD$(13,4,1)$\\
14 & 0 & 19 & $(\alpha,\beta,\gamma)=(9,9,1)$\\
15 & 0 & 20 & minimum $\{K_3,K_4\}$ decomposition\\
16 & 0 & 20 & BIBD$(16,4,1)$\\
17 & 0 & 20 & $16$ 4-cliques and $4$ quintuples\\
\bottomrule
\end{tabular}
\end{center}
\end{theorem}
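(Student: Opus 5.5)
The plan is to treat each order separately, pairing an explicit construction (upper bound) with a lower bound taken from \cref{thm:modlb}, \cref{lem:blockcount}, or the master tests of \cref{lem:elim}.

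\emph{Excess column.} \cref{thm:minexcess} gives this directly: zero except for $v=6$ and $v=8$.

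\emph{Trivial orders.} For $v=3,4,5$ a single block is optimal. For $v=6$ the excess is $3$. Every block covers at most $10$ pairs and $15+3=18>10$, so one block is impossible. Two blocks would need $6+6\ge 18$ or would include a $K_5$. In the latter case the excess is $1\pmod 3$, which contradicts $\xi=3$ via the congruence in the proof of \cref{thm:minexcess}. Hence three $K_4$'s are optimal.

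\emph{Orders where the construction meets the modular bound.} For $v=7,9,13,16$ the listed designs attain the bound of \cref{thm:modlb}:
\[
\lceil 54/20\rceil=3\ (\text{$v=7$, case }3\pmod4)
\]
is too weak, so $v=7$ needs separate care, whereas for $v=13$ the bound $\lceil156/20\rceil=8$ is also weak. I will therefore check each order against the finer arguments rather than relying on \cref{thm:modlb} alone.

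The main tool is to enumerate the small values of $\delta=7\alpha+4\beta$ with $\delta\equiv -\binom v2\pmod{10}$. For each candidate I solve for $(\alpha,\beta,\gamma)$ and eliminate it using \cref{lem:elim}(ii)--(v) together with local degree arguments.

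\begin{itemize}
\item For $v=7$: $\delta\equiv 9\pmod{10}$, and $c=7$ gives $\delta=49=7\cdot7$. Smaller values would need a $K_5$ or $K_4$ in $K_7$. A $K_5$ leaves a residual with vertex degrees $2$ and $6$, and I expect the case analysis to reduce to the impossibility of decomposing $K_7-K_5$ suitably. A $K_4$ forces $\beta\ge5$ by (iv), which is too many edges.
\item For $v=9,11,13,15,17$ I run the same $\delta$-enumeration. For odd $v$, (iv) and (v) prune all configurations with few $K_4$'s, and (iii) fixes $\gamma\bmod 3$.
\item For $v=17$ the target is $\delta=64$. The argument of \cref{thm:bounds} for the class $17\pmod{20}$ uses only \cref{lem:elim} and \cref{lem:forbidden917}, neither of which needs $v$ large, so I reuse it verbatim.
\item For $v=13$ the bound $(v^2-v+84)/20=12$ leaves a gap to the claimed $13$. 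I must exclude $\delta=42$, i.e.\ $(\alpha,\beta)=(6,0)$ or $(2,7)$. The case $(2,7)$ falls to \cref{lem:forbidden13}. For $(6,0)$ we get $\gamma=(78-18)/10=6$, and six $K_5$'s in $K_{13}$ are pairwise edge-disjoint, so they meet pairwise in at most one point. I plan to exclude this by counting vertex degrees: each vertex satisfies $2\alpha_x+4\gamma_x=12$.
\end{itemize}

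\emph{Even orders $v=10,12,14$.} Here I use \cref{thm:modlb} with its parity refinements. For $v=12$, the class $12\pmod{20}$ argument from the end of the proof of \cref{thm:bounds} gives $(144+12+24)/20=9$, which is far below $13$, so $v=12$ requires a genuine enumeration of $\delta\in\{12,22,\dots\}$ using $\beta_x$ odd at every vertex, i.e.\ $\beta\ge3$ and $\beta_x\ge1$.

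\emph{Order $v=8$.} The problem is an integer program over the three excess types ($P_3$, a double edge, $2K_2$), with local equations from \eqref{eq:local}. I solve it by hand or cite the computation.

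The main obstacle is the individual small-order exclusions ($v=10,12,14,15$), where the modular tools are weak and one must rule out configurations containing a few $K_5$'s by structural arguments. For these I will first try vertex-degree parity combined with bounding the number of pairwise edge-disjoint $K_5$'s in $K_v$. If that is insufficient, I will fall back on exhaustive computer search, in the spirit of \cite{Kovar2025}.
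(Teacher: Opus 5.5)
\textbf{There is a genuine gap: for most orders the proposal gives no lower bound, and the tools you name cannot give one.} \cref{thm:modlb}, the tests of \cref{lem:elim}, the local equations \eqref{eq:local}, and counts of pairwise edge-disjoint quintuples all admit parameter sets below the tabulated optimum.
\begin{itemize}
\item For $v=12$, take $(\alpha,\beta,\gamma)=(6,3,3)$, so $c=12<13$. It satisfies (ii) and (iii). The local equations are consistent: three vertices have $\gamma_x=2$ and $(\alpha_x,\beta_x)=(0,1)$, nine have $\gamma_x=1$ and $(\alpha_x,\beta_x)=(2,1)$. Three edge-disjoint $K_5$'s do fit in $K_{12}$.
\item For $v=11$, take $(5,5,1)$, so $c=11<16$. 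It passes (ii)--(iv) and the degree equations: four quintuple vertices of type $(0,2)$, one of type $(3,0)$, and all six outside vertices of type $(2,2)$.
\item Likewise $(3,7,4)$ for $v=14$ (with $c=14<19$) and $(7,9,3)$ for $v=15$ (with $c=19<20$) survive every test you list.
\end{itemize}
Deferring these to unspecified ``structural arguments'' or an exhaustive search leaves the theorem unproved for $v=11,12,14,15$.

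Two smaller problems. Your opening claim that the designs for $v=7,9,13,16$ attain \cref{thm:modlb} is false: the bound there is only $4,4,8,14$. After that, $v=16$ is never treated. Also, for $v=12$ the admissible values are $\delta\equiv4\pmod{10}$, not $12,22,\dots$.

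\textbf{The missing idea is the paper's split on whether a quintuple occurs.}
\begin{itemize}
\item If $\gamma=0$, you have a $\{K_3,K_4\}$ decomposition. \cref{thm:k34-optimum} (\cref{tab:k34}) gives exactly $7,12,12,13,13,20,20$ for $v=7,9,10,12,13,15,16$. For $v=11,14,17$ no such decomposition exists, because $v\equiv2\pmod3$ (\cref{thm:k34-excess}).
\item If $\gamma\ge1$, fix a quintuple $Q$. The $5(v-5)$ edges leaving $Q$ must be covered by the other blocks. A triangle, 4-clique, or quintuple covers at most $2,3,4$ of them, so $5(v-5)\le 2\alpha+3\beta+4(\gamma-1)$. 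Eliminating $\alpha$ and $\beta$ via $c=\alpha+\beta+\gamma$ and $3\alpha+6\beta+10\gamma=\binom v2$ gives \cref{lem:cross}. Its bound is at least the tabulated value for every $v\in\{7,9,10,\dots,17\}$.
\end{itemize}
For example, in your $v=12$ configuration $35$ edges leave $Q$, but the remaining blocks cover at most $2\cdot6+3\cdot3+4\cdot2=29$ of them. The same one-line count kills the $v=11,14,15$ examples: $25<30$, $39<45$ and $49<50$.

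The rest of your plan is sound: $v\le6$, $v=13$ via \cref{lem:forbidden13}, $v=17$ by reusing the argument of \cref{thm:bounds}, and $v=8$ by the excess-type ILP. For $(6,0,6)$ at $v=13$, note that the degree equation alone does not exclude it. The pair count $\sum_x\binom{\gamma_x}{2}\ge21>15=\binom62$ does.
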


\begin{proof}
The upper bounds are the displayed constructions.  The lower bounds use \cref{thm:modlb} together with the following cross-edge bound when a quintuple is present.
\end{proof}

\begin{lemma}[Cross-edge bound]\label{lem:cross}
If $K_v$ has a decomposition into $\alpha$ triangles, $\beta$ 4-cliques and $\gamma\ge1$ quintuples, with $c=\alpha+\beta+\gamma$, then
\[
        c\ge\max\left\{
        \left\lceil\frac{-v^2+41v-168}{12}\right\rceil,
        \left\lceil\frac{-v^2+31v-124}{6}\right\rceil
        \right\}.
\]
\end{lemma}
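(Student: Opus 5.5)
The plan is a local count around a single quintuple. Fix one quintuple $Q$ of the decomposition, which exists because $\gamma\ge1$, and put $W=V(K_v)\setminus Q$, so $|W|=v-5$. Every other block meets $Q$ in at most one vertex, since two common vertices would make an edge of $Q$ covered twice. Let $n_3,n_4,n_5$ be the numbers of triangles, 4-cliques and quintuples other than $Q$ that meet $Q$, and put $N=n_3+n_4+n_5$. Then
\[
        c\ge 1+N .
\]

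\textbf{Step 1: two identities.} The $5(v-5)$ cross edges between $Q$ and $W$ are covered exactly once, and only by blocks meeting $Q$ in one vertex. A $k$-block of this kind covers $k-1$ cross edges, so
\[
        2n_3+3n_4+4n_5=5(v-5).
\]
Such a block also covers $\binom{k-1}{2}$ edges inside $W$, namely $1$, $3$ or $6$ of them. Because the decomposition has no excess, these edges are distinct, which gives
\[
        n_3+3n_4+6n_5\le\binom{v-5}{2}.
\]

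\textbf{Step 2: a linear-programming bound on $N$.} Take nonnegative multipliers $a,b$ with
\[
        2a-b\le1,\qquad 3a-3b\le1,\qquad 4a-6b\le1 .
\]
Since $n_3,n_4,n_5\ge0$, these give
\[
        N\ge a\cdot5(v-5)-b\binom{v-5}{2}.
\]
For the first bound, choose the vertex of the dual region where the last two constraints are tight, namely $(a,b)=(1/2,1/6)$; then $2a-b=5/6\le1$. This gives $N\ge(v-5)(36-v)/12$, hence
\[
        c\ge1+\frac{(v-5)(36-v)}{12}=\frac{-v^2+41v-168}{12}.
\]
For the second bound, choose the vertex where the first two constraints are tight, namely $(a,b)=(2/3,1/3)$; then $4a-6b=2/3\le1$. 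This gives $c\ge(-v^2+51v-224)/6$. That quantity exceeds $(-v^2+31v-124)/6$ by $(20v-100)/6\ge0$ for $v\ge5$, so the stated second bound follows. Since $c$ is an integer, ceilings may be taken.

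\textbf{Main obstacle.} The delicate point is justifying the inequality in Step 1 cleanly. One must check that edges inside $W$ coming from different blocks through $Q$ are never counted twice, which holds because the covering is a decomposition. One must also check that the cross-edge count uses only blocks meeting $Q$, and that each such block meets $Q$ in exactly one vertex. Once these are in place, the rest is a routine verification that the two dual points are feasible. I would double-check the second stated expression against the weaker multiplier choice the authors may have intended, but the argument above already implies it.
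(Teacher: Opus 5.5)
Your setup, the two counting relations, and the first bound from $(a,b)=(1/2,1/6)$ are correct. The evaluation at $(a,b)=(2/3,1/3)$, however, is wrong. The correct computation is
\[
N\ge \frac{2}{3}\cdot 5(v-5)-\frac13\binom{v-5}{2}=\frac{(v-5)(26-v)}{6},
\qquad
c\ge 1+\frac{(v-5)(26-v)}{6}=\frac{-v^2+31v-124}{6}.
\]
This is exactly the stated second bound, not something stronger.

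The inequality $c\ge(-v^2+51v-224)/6$ that you wrote is false; the cross-edge term appears to have been doubled. For the decomposition of $K_{17}$ into $16$ copies of $K_4$ and $4$ quintuples, $c=20$ while your expression gives $59$. For $K_{11}$ with one quintuple and $15$ triangles, it gives $36$ against $c=16$. So, as written, your second bound is deduced from a false inequality. The remark about an excess of $(20v-100)/6$, and the hedge about a ``weaker multiplier choice'', should be replaced by the correct computation. There is no weaker choice: the $K_{11}$ example shows the second bound is attained.

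With that fix your proof is complete, and it packages the same counts differently from the paper. The paper sums cross-edge contributions over all blocks other than $Q$, which gives the inequality $5(v-5)\le2\alpha+3\beta+4(\gamma-1)$. It then eliminates $\alpha,\beta$ with the global edge count to reach $c\ge-\frac{v^2}{6}+\frac{31v}{6}+\frac{\gamma}{3}-21$. Finally it substitutes $\gamma\ge1$ for the second bound, and the lower bound $\gamma\ge(v^2-v-12c)/8$ from \cref{lem:elim}(ii) for the first.

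You instead discard blocks disjoint from $Q$, and use an exact cross-edge equation together with an inequality for the edges inside $W$. Both bounds then appear as two vertices of a single dual region. The vertex $(2/3,1/3)$ reproduces the paper's $\gamma\ge1$ case, and $(1/2,1/6)$ reproduces the case using \cref{lem:elim}(ii). Your route is self-contained and shows where the two expressions come from. The paper's is shorter and reuses an existing lemma.
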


\begin{proof}
Fix one quintuple.  The remaining blocks must cover the $5(v-5)$ edges from it to the other vertices.  A triangle, 4-clique, or another quintuple covers at most $2,3,4$ of these crossing edges, respectively, so
\[
        5(v-5)\le2\alpha+3\beta+4(\gamma-1).
\]
Eliminating $\alpha$ and $\beta$ using $c=\alpha+\beta+\gamma$ and $3\alpha+6\beta+10\gamma=\binom v2$ gives
\[
        c\ge -\frac{v^2}{6}+\frac{31v}{6}+\frac{\gamma}{3}-21.
\]
Now use $\gamma\ge1$ and the lower bound on $\gamma$ from \cref{lem:elim}.
\end{proof}

The first unresolved order in the class $v\equiv13\pmod {20}$ is $v=33$.  The general lower bound in \cref{thm:bounds} gives $\Cxi(33,\{3,4,5\},2)\ge57$.  In a companion preprint \cite{KovarZhang2026K33}, this was improved to
\[
        \Cxi(33,\{3,4,5\},2)\ge59
\]
by combining theoretical reductions with an exact filtering and SAT-based verification.  We do not reproduce that certificate here; we cite it only to indicate the current boundary of the unresolved residue class.

\section{Concluding remarks}

The two-size problem for $K_3$ and $K_4$ provides the basic local-congruence and truncation methods, and its unavoidable excess for $v\equiv2\pmod3$ motivates the introduction of quintuples.  For $\{K_3,K_4,K_5\}$ coverings, the minimum excess is completely determined: it is zero except for $v=6$ and $v=8$.  The block-count identity then organizes the secondary optimization and avoids a separate ad hoc argument for each residue class.

The secondary optimum is exact in the residue classes
\[
0,1,2,3,4,5,9,17,18,19\pmod {20},
\]
with the stated lower bound on $v$ in the classes $9$ and $17$, and it is also determined for all $3\le v\le17$.  For the remaining residue classes the bounds in \cref{thm:bounds} leave finite gaps.  Closing them will likely require sharper structural exclusions for small candidate pairs $(\alpha,\beta)$, or computer-verifiable local analyses of the kind used for the separate $K_{33}$ result in \cite{KovarZhang2026K33}.

\paragraph{Acknowledgements.}
The authors thank Mariusz Meszka for suggestions on applying resolvable designs.

\paragraph{Funding.}
This work is partially supported by Grant of SGS No. SP2025/012 and No. SP2025/049, VSB--Technical University of Ostrava, Czech Republic. Petr Kov\'a\v{r} is co-funded by the financial support of the European Union under the REFRESH--Research Excellence For Region Sustainability and High-tech Industries project number CZ.10.03.01/00/22 003/0000048 via the Operational Programme Just Transition. Yifan Zhang is co-funded by GA\v{C}R grant 25-16847S.

\paragraph{Data availability.}
No new computational data are required for the results proved in this paper. 

\bibliographystyle{plainurl}
\bibliography{refs}

\end{document}